\newcommand{\qed}{$\;\;\;\Box$}
\newenvironment{proof}{\par\smallbreak{\sl Proof.~}}
{\unskip\nobreak\hfill \qed \par\medbreak}
\newcounter{claim}
\renewcommand{\theclaim}{\arabic{claim}}
{\par\medskip\par}
\newcommand{\hide}[1]{}
\newcommand{\Con}{{\mbox{C}}}
\newcommand{\N}{{\mathbb N}}
\newcommand{\R}{{\mathbb R}}
\newcommand{\C}{{\mathbb C}}
\newcommand{\Z}{{\mathbb Z}}
\newcommand{\M}{{\mathbb M}}
\newcommand{\Aa}{{\mathbb A}}
\newcommand{\Bb}{{\mathbb B}}
\newcommand{\A}{{\cal A}}
\newcommand{\B}{{\cal B}}
\newcommand{\CC}{{\cal C}}
\newcommand{\LL}{{\cal L}}
\newcommand{\beq}{\begin{equation}}
\newcommand{\ee}{\end{equation}}
\renewcommand{\d}{\partial}
\newtheorem{thm}{Theorem}[section]
\newtheorem{lemma}[thm]{Lemma}
\newtheorem{cor}[thm]{Corollary}
\newtheorem{rem}[thm]{Remark}
\newcommand{\al}{\alpha}
\newcommand{\be}{\beta}
\newcommand{\ga}{\gamma}
\newcommand{\de}{\delta}
\newcommand{\eps}{\varepsilon}
\newcommand{\vphi}{\varphi}
\newcommand{\om}{\omega}
\newcommand{\reff}[1]{(\ref{#1})}
\newcommand{\im}{\mathop{\rm im}}
\newcommand{\diag}{\mathop{\rm diag}\nolimits}
\newcommand{\ess}{\mathop{\rm ess}}
\newtheorem{ex}[thm]{Example}
\title{
Fredholmness and Smooth Dependence for Linear Time-Periodic Hyperbolic Systems
} 
\newcounter{thesame}
\author{
I.~Kmit
 \ \ \ L.~Recke\\
{\small
Institute of Mathematics, Humboldt University of Berlin,}
\\
{\small Rudower Chaussee 25, D-12489 Berlin, Germany }
\\
{\small
and Institute for Applied Problems of Mechanics and Mathematics, }
\\
{\small
Ukrainian Academy of Sciences,  Naukova St.\ 3b, 79060 Lviv,
Ukraine 
}
\\
{\small   E-mail:
{\tt kmit@informatik.hu-berlin.de}}\\[5mm]
{\small
Institute of Mathematics, Humboldt University of Berlin,}\\
{\small 
Rudower Chaussee 25, D-12489 Berlin, Germany}\\
{\small   E-mail:
{\tt recke@mathematik.hu-berlin.de}}
}
\date{}
\begin{document}

\maketitle

\begin{abstract}
This paper concerns $n\times n$ linear one-dimensional hyperbolic systems  
of the type 
$$
\partial_tu_j  + a_j(x)\partial_xu_j + \sum\limits_{k=1}^nb_{jk}(x)u_k = f_j(x,t),\; j=1,\ldots,n,
$$
with periodicity conditions in time and reflection boundary conditions in space. 
We state conditions on the data $a_j$ and  $b_{jk}$ and the reflection coefficients
such that the system is Fredholm solvable.
Moreover, we state  conditions on the data such that for any right hand side there exists exactly
one solution, that the solution survives under small perturbations of the data, and that the corresponding 
data-to-solution-map
is smooth with respect to appropriate function space norms. In particular, those  conditions
imply that no small denominator effects occur.

We  show that perturbations of the coefficients $a_j$ lead to 
essentially different results than  perturbations of the coefficients $b_{jk}$, in general.
Our results
cover cases of non-strictly hyperbolic systems as well as systems with discontinuous coefficients $a_j$
and $b_{jk}$, but they are new even  in the case of strict hyperbolicity and of smooth coefficients.
\end{abstract}

{\it Keywords:
first-order hyperbolic systems, reflection boundary conditions,
 no small denominators, Fredholm alternative,
smooth data-to-solution map}

\section{Introduction}\label{sec:intr}
\renewcommand{\theequation}{{\thesection}.\arabic{equation}}
\setcounter{equation}{0}

\subsection{Problem and main results}\label{sec:results}

This paper concerns linear inhomogeneous hyperbolic systems of first order PDEs
in one space dimension of the type
\beq\label{eq:1.1}
\partial_tu_j  + a_j(x)\partial_xu_j + \sum\limits_{k=1}^nb_{jk}(x)u_k = f_j(x,t), \;
j=1,\ldots,n,\;  x\in(0,1)  
\ee
with time-periodicity conditions
\beq\label{eq:1.2}
u_j(x,t+2\pi) = u_j(x,t),\; j=1,\ldots,n,\;  x\in[0,1]  
\ee
and reflection boundary conditions
\beq\label{eq:1.3}
\begin{array}{l}
\displaystyle
u_j(0,t) = \sum\limits_{k=m+1}^nr_{jk}^0u_k(0,t),\; j=1,\ldots,m,\\
\displaystyle
u_j(1,t) = \sum\limits_{k=1}^mr_{jk}^1u_k(1,t), \; j=m+1,\ldots,n.\\
\end{array}
\ee
Here $1\le m<n$ are fixed natural numbers, 
$r_{jk}^0$ and $r_{jk}^1$ are real numbers, 
and the right-hand sides $f_j:[0,1] \times \R \to \R$
are supposed to be $2\pi$-periodic with respect to $t$.

Roughly speaking, we will prove results of the following type:

First, we will state sufficient conditions on the data $a_j, b_{jk},r_{jk}^0$, and  $r_{jk}^1$
such that the system~(\ref{eq:1.1})--(\ref{eq:1.3}) has a Fredholm type solution behavior, i.e. that
it is solvable if and only if the right hand side is orthogonal to all solutions 
to the corresponding homogeneous adjoint 
system
$$
-\partial_tu_j  - \partial_x\left(a_j(x)u_j\right) + \sum\limits_{k=1}^nb_{kj}(x)u_k = 0,
\; j=1,\ldots,n,\;  x\in(0,1),  
$$
$$
u_j(x,t+2\pi) = u_j(x,t),\; j=1,\ldots,n,\;  x\in[0,1], 
$$
\beq \label{eq:1.6}
\begin{array}{l}
\displaystyle
a_j(0)u_j(0,t) = -\sum\limits_{k=1}^mr_{kj}^0a_k(0)u_k(0,t),\; j=m+1,\ldots,n,\\
\displaystyle
a_j(1)u_j(1,t) = -\sum\limits_{k=m+1}^nr_{kj}^1a_k(1)u_k(1,t),\; j=1,\ldots,m.
\end{array}
\end{equation}
And second, we will state sufficient conditions on the data $a_j, b_{jk},r_{jk}^0$, and  $r_{jk}^1$
such that the system~(\ref{eq:1.1})--(\ref{eq:1.3}) is uniquely solvable for any  right hand side,
that this unique solvability property survives under small perturbations of the data, and that the corresponding 
data-to-solution-maps
are smooth  with respect to appropriate function space norms.
For example, under those sufficient conditions the following is true: \\

(I) If  $\partial_t^j f \in L^2\left((0,1)\times(0,2\pi);\R^n\right)$ for $j=0,1$,
then the map $b \mapsto u$ is $C^\infty$-smooth from an open set in
$L^\infty\left((0,1);\M_n\right)$ into $L^2\left((0,1)\times(0,2\pi);\R^n\right)$.\\

(II) If  $\partial_t^j f \in L^2\left((0,1)\times(0,2\pi);\R^n\right)$ for $j=0,1,2$,
then the map $b \mapsto u$ is $C^\infty$-smooth from an open set in
$L^\infty\left((0,1);\M_n\right)$ into $C\left([0,1]\times[0,2\pi];\R^n\right)$.\\

(III) If  $\partial_t^j f \in L^2\left((0,1)\times(0,2\pi);\R^n\right)$ for $j=0,1, 
\ldots,k$ with $k \ge 2$, then the map $a \mapsto u$ is $C^{k-1}$-smooth 
(rsp.  $C^{k-2}$-smooth)
from an open subset of
$BV\left((0,1);\M_n\right)$ into  $L^2\left((0,1)\times(0,2\pi);\R^n\right)$.
(rsp.  $C\left([0,1]\times[0,2\pi];\R^n\right)$).
\\

\noindent
Here  and in what follows we denote by
$$
a:=\diag(a_1,\ldots,a_n), \;  b:=[b_{jk}]_{j,k=1}^n, f:=(f_1,\ldots,f_n),  \mbox{ and } u:=(u_1,\ldots,u_n)
$$
the diagonal matrix of the  coefficient functions $a_j$, the matrix of  the  coefficient functions $b_{jk}$,
and the vectors of  the right hand sides $f_j$ and the solutions $u_j$, respectively, and  $\M_n$ 
is the space of all real $n \times n$ matrices.

In order to formulate our results more precisely, let us introduce the following
function spaces: 
For $\ga\ge 0$ we denote by $W^{\ga}$ the vector space of all locally integrable functions
$f: [0,1]\times\R\to\R^n$ such that
$f(x,t)=f\left(x,t+2\pi\right)$ for almost all $x \in (0,1)$ and $t\in\R$
and that
\begin{equation}\label{eq:1.12}
\|f\|_{W^{\ga}}^2:=\sum\limits_{s\in\Z}(1+s^2)^{\gamma}
\int\limits_0^1\left\|\int\limits_0^{2\pi}
f(x,t)e^{-ist}\,dt\right\|^2\,dx<\infty.
\end{equation}
Here and in what follows $\|\cdot\|$ is the Hermitian norm in $\C^n$. 
It is well-known
(see, e.g., \cite{herrmann}, \cite[Chapter 5.10]{robinson}, 
and \cite[Chapter 2.4]{vejvoda}) 
that $W^{\ga}$ is a Banach space 
with the norm~(\ref{eq:1.12}). 
In fact, it is the anisotropic Sobolev space of all  measurable functions
$u: [0,1]\times\R\to\R^n$ such that $u(x,t)=u\left(x,t+2\pi\right)$ for almost all  $x \in (0,1)$ and $t\in\R$
and that the distributional partial derivatives of $u$ with respect to $t$ up to the order $\ga$ are  locally 
quadratically integrable.

Further,  for $\ga\ge 1$  and $a \in L^\infty\left((0,1);\M_n\right)$ with $\mbox{ess inf } |a_j|>0$ 
for all $j=1,\ldots,n$
we will work with the
function spaces
$$
U^{\gamma}(a) := \Bigl\{u\in W^{\gamma}:\, \d_xu\in W^{0},\,
\d_tu+a\d_xu\in W^{\gamma}\Bigr\} 
$$
endowed with the norms
$$
\|u\|_{U^{\gamma}(a)}^2:=\|u\|_{W^{\gamma}}^2
+\left\|\d_tu+a\d_xu\right\|_{W^{\gamma}}^2.
$$
Remark that the space $U^{\gamma}(a)$ depends on $a$. 
In particular, it is larger than the space of  all $u \in W^\ga$ such that
$\partial_t u \in  W^\ga$ and $\partial_x u \in  W^\ga$ (which does not  depend on $a$).
For $u \in U^\ga(a)$ there exist traces $u(0,\cdot), u(1,\cdot) \in
L^2_{loc}(\R;\R^n)$ (see Section \ref{sec:spaces}), and,  hence, it makes sense
to consider the closed subspaces in $U^\ga(a)$
\begin{eqnarray*}
V^\ga(a,r)&:=&\{u \in U^\ga(a):\, (\ref{eq:1.3}) \mbox{ is fulfilled}\},\\
\tilde{V}^\ga(a,r)&:=&\{u \in U^\ga(a):\, (\ref{eq:1.6}) \mbox{ is fulfilled}\}.
\end{eqnarray*}
Here we use the notation
$$
r:=(r^0,r^1) \mbox{ with } r^0:=[r^0_{jk}]_{j=1,k=m+1}^{m\;\;\;\;\;n}\;, 
r^1:=[r^1_{jk}]_{j=m+1,k=1}^{n\;\;\;\;\;\;\;\;\;\;\;m}
$$
for the matrices of  the  reflection coefficients $r^0_{jk}$ and  $r^1_{jk}$.
Further,  we denote by
$$
b^0:=\mbox{diag}(b_{11},b_{22},\ldots,b_{nn}) \; \mbox{ and } \; b^1:=b-b^0
$$
the diagonal and the off-diagonal parts of the coefficient matrix $b$, respectively.

Further, we introduce 
operators $\A(a,b^0)\in\LL(V^{\gamma}(a,r);W^{\gamma})$,
$\tilde{\A}(a,b^0)\in\LL(\tilde{V}^{\gamma}(a,r);W^{\gamma})$
and $\B(b^1),\tilde{\B}(b^1)\in\LL(W^{\gamma})$ by
\begin{eqnarray*} 
\begin{array}{rcl}
\A(a,b^0) u&:=&\d_tu+a\d_xu+b^0u,\nonumber\\
\tilde\A(a,b^0) u&:=&-\d_tu-\d_x(au)+b^0u,\nonumber\\
\B(b^1) u&:=&b^1u,\nonumber\\
\tilde\B(b^1) u&:=&(b^1)^Tu.\nonumber
\end{array}
\end{eqnarray*}
Remark that the operators  $\A(a,b^0)$,
$\B(b^1)$,
and $\tilde{\B}(b^1)$ are well-defined for $a_j, b_{jk} \in  L^\infty(0,1)$, while 
$\tilde{\A}(a,b^0)$ is well-defined under additional regularity assumptions with respect to the coefficients
$a_j$, for example, 
for $a_j \in C^{0,1}([0,1])$.
Obviously, the operator equation 
\begin{equation}
\label{abstr}
\A(a,b^0)u+\B(b^1)u=f
\end{equation} 
is an abstract representation of the
periodic-Dirichlet problem~(\ref{eq:1.1})--(\ref{eq:1.3}).

Finally,  for $s \in \Z$
we introduce
the following complex $(n-m)\times(n-m)$ matrices 
\begin{equation}
\label{R}
R_s(a,b^0,r):=
\left[\sum\limits_{l=1}^m
e^{is(\al_j(1)-\al_l(1))+\be_j(1)-\be_l(1)}r_{jl}^1r_{lk}^0\right]_{j,k=m+1}^n,
\end{equation}
where
\begin{equation}\label{coef}
\al_j(x):=\int\limits_0^x\frac{1}{a_j(y)}\,dy, \;\;\be_j(x):=\int\limits_0^x
\frac{b_{jj}(y)}{a_j(y)}\,dy.
\end{equation}\\

Our first result concerns an isomorphism property of  $\A(a,b^0)$:
\begin{thm}\label{thm:1.0} 
For all $c>0$ there exists $C>0$ such that the following is true:
If 
\begin{equation}\label{ge}
a_j, b_{jj} \in  L^\infty(0,1)\; \mbox{ and } \; \ess\inf|a_j|\ge c \; \mbox{ for all } j=1,\ldots,n,
\end{equation}
\begin{equation}\label{le}
\sum_{j=1}^n\|b_{jj}\|_\infty
+ \sum_{j=1}^m\sum_{k=m+1}^n |r^0_{jk}|+\sum_{j=m+1}^n\sum_{k=1}^m |r^1_{jk}|\le \frac{1}{c},
\end{equation}
and
\beq
\label{cond}
|\det (I-R_s(a,b^0,r))|\ge c \; \mbox{ for all } \; s \in \Z,
\ee
then for all $\ga \ge 1$
the operator $\A(a,b^0)$ is an isomorphism from $V^{\gamma}(a,r)$ onto $W^{\gamma}$ and
$$
\|\A(a,b^0)^{-1}\|_{{\cal L}(W^{\gamma};V^{\gamma}(a,r))}\le C.
$$
\end{thm}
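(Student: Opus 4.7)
The plan is to diagonalize $\A(a,b^0)$ by a double reduction: since $b^0$ is diagonal, $\A(a,b^0)u=f$ decouples in $j$, and after Fourier expansion in $t$, $u_j(x,t)=\sum_{s\in\Z}u_j^s(x)\,e^{ist}$, it further decouples across time-frequencies $s\in\Z$. Each mode satisfies the scalar first-order linear ODE
$$
a_j(x)\,(u_j^s)'(x)+(is+b_{jj}(x))\,u_j^s(x)=f_j^s(x),\qquad j=1,\dots,n,
$$
whose integrating factor is exactly $e^{is\al_j(x)+\be_j(x)}$, with $\al_j,\be_j$ as in~\reff{coef}. Solving this ODE gives an explicit closed-form expression for $u_j^s$ as an affine function of one boundary value---namely $u_j^s(0)$ for $j\le m$ and $u_j^s(1)$ for $j>m$---plus a bounded integral operator applied to $f_j^s$, whose kernel has modulus controlled uniformly in $s$ by \reff{ge}--\reff{le}.

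Next I would insert these $n$ explicit formulas into the reflection conditions~\reff{eq:1.3}. The BC at $x=0$ together with the ODE solution expresses $u_l^s(1)$, $l\le m$, linearly in $(u_k^s(0))_{k>m}$; substituting this into the BC at $x=1$ combined with the ODE-based expression for $u_k^s(1)$ in terms of $u_k^s(0)$ yields, after the expected "round-trip" cancellations (a characteristic leaves $x=0$ at index $l\le m$, reflects via $r^1$ at $x=1$ to index $j>m$, returns to $x=0$, and reflects via $r^0$), a closed $(n-m)\times(n-m)$ linear system
$$
(I-R_s(a,b^0,r))\,\mathbf{B}_s=\Phi_s(f^s)
$$
for $\mathbf{B}_s:=(u_{m+1}^s(0),\dots,u_n^s(0))^T$, with $R_s$ exactly as in~\reff{R} and $\|\Phi_s(f^s)\|\le C(c)\|f^s\|_{L^2(0,1;\C^n)}$ by Cauchy--Schwarz. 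Hypothesis \reff{cond} gives $|\det(I-R_s)|\ge c$; from \reff{ge}--\reff{le} every entry of $R_s$ is bounded by a constant depending only on $c$ (since $|e^{is(\al_j-\al_l)}|=1$, $|e^{\be_j-\be_l}|\le e^{2/c^2}$, and $|r_{jl}^1 r_{lk}^0|\le 1/c^2$), so Cramer's rule yields $\|(I-R_s)^{-1}\|\le C'(c)$. Substituting back into the explicit solution formula gives $\|u^s\|_{L^2(0,1;\C^n)}\le C\|f^s\|_{L^2(0,1;\C^n)}$ uniformly in $s\in\Z$.

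To finish, I sum this mode-wise estimate against the weights $(1+s^2)^\ga$ of \reff{eq:1.12}: this gives $\|u\|_{W^\ga}\le C\|f\|_{W^\ga}$ directly, and rewriting the ODE as $isu^s+a(u^s)'=f^s-b^0u^s$ gives the companion bound $\|\d_tu+a\d_xu\|_{W^\ga}\le C\|f\|_{W^\ga}$, hence $\|u\|_{U^\ga(a)}\le C\|f\|_{W^\ga}$ with $C$ independent of $\ga\ge1$; by construction $u$ satisfies \reff{eq:1.3}, so $u\in V^\ga(a,r)$. Injectivity is immediate: for $f=0$ the system $(I-R_s)\mathbf{B}_s=0$ forces $\mathbf{B}_s=0$ and thus $u=0$. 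The main obstacle will be identifying the elimination matrix as \emph{exactly} $I-R_s$ (this requires choosing the right unknowns, namely the outgoing values $u_l^s(1)$ for $l\le m$ and $u_k^s(0)$ for $k>m$) and simultaneously tracking every constant so that it depends only on $c$, uniformly in $s,a,b^0,r,\ga$.
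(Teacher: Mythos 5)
Your proposal is correct and follows essentially the same route as the paper's proof: Fourier decomposition in $t$, variation of constants for the decoupled scalar ODEs with integrating factor $e^{is\al_j+\be_j}$, elimination via the reflection conditions to the $(n-m)\times(n-m)$ system with coefficient matrix $I-R_s(a,b^0,r)$ in the unknowns $u_{m+1}^s(0),\dots,u_n^s(0)$, a Cramer-rule bound uniform in $s$ from \reff{ge}--\reff{cond}, and summation against the weights $(1+s^2)^\ga$. The only step you gloss over is the converse verification (needed for injectivity) that \emph{every} $u\in V^\ga(a,r)$ with $\A(a,b^0)u=f$ has Fourier modes solving the ODE boundary value problem \reff{eq:3.2}--\reff{eq:3.3}; the paper settles this with a short test-function argument.
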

~\\

Our second result concerns the Fredholm solvability of (\ref{abstr}): 
\begin{thm}\label{thm:1.1} Suppose that conditions \reff{ge}  and \reff{cond}  are fulfilled for some $c>0$.
Suppose also that
\beq \label{eq:1.10}
\left.
\begin{array}{l}
\mbox{for all } j\ne k \mbox{ there is } c_{jk}\in BV(0,1) \mbox{ such that }\\
a_k(x)b_{jk}(x) = c_{jk}(x)(a_j(x)-a_k(x)) \mbox{ for a.a. } x \in [0,1].
\end{array}
\right\}
\ee
Then the following is true:

(i) The operator $\A(a,b^0)+\B(b^1)$ is a Fredholm operator with index zero
from $V^{\gamma}(a,r)$ into $W^{\gamma}$ for all $\ga\ge 1$, and 
$$
\ker({\A}(a,b^0)+{\B}(b^1)):=\left\{u \in V^{\gamma}(a,r):\,\left(\A(a,b^0)+\B(b^1)\right)u=0\right\}
$$
does not depend on $\ga$.

(ii) Suppose $a \in C^{0,1}\left([0,1];\M_n\right)$. Then 
\begin{eqnarray*}
\lefteqn{
\left\{\left(\A(a,b^0)+\B(b^1)\right)u: u \in V^{\gamma}(a,r)\right\}}\\
&&=\left\{f\in W^{\ga}:\langle f,u\rangle_{L^2}=0
\mbox{ for all }u\in\ker\left(\tilde \A(a,b^0)+\tilde \B(b^1)\right)\right\},
\end{eqnarray*}
where
$$
\ker({\tilde{\A}}(a,b^0)+{\tilde{\B}}(b^1)):=\{u \in \tilde{V}^{\gamma}(a,r):\,
\left(\tilde{\A}(a,b^0)+\tilde{\B}(b^1)\right)u=0\}
$$ 
does not depend on $\ga$.
\end{thm}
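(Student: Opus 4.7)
The plan is to show that $\A(a,b^0)+\B(b^1)$ is a compact perturbation of an index-zero Fredholm operator. The basic observation is that, since the coefficients depend only on $x$, both $\A(a,b^0)$ and $\B(b^1)$ diagonalize under the Fourier decomposition in $t$: writing $u(x,t)=\sum_{s\in\Z}\hat u(x,s)\,e^{ist}$, the equation $(\A(a,b^0)+\B(b^1))u=f$ becomes a family of linear ODE boundary value problems in $x\in(0,1)$,
$$
L_s\hat u(\cdot,s):=is\,\hat u+a\,\partial_x\hat u+(b^0+b^1)\,\hat u=\hat f(\cdot,s),
$$
with the reflection conditions \reff{eq:1.3} taken mode by mode. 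The proof establishes in turn: (a) compactness of $\B(b^1):V^\gamma(a,r)\to W^\gamma$ using \reff{eq:1.10}; (b) Fredholmness of index zero of $\A(a,b^0)$, obtained from the proof of Theorem~\ref{thm:1.0} adapted to drop \reff{le}; (c) $\gamma$-independence of the kernel; and (d) the duality yielding part~(ii).

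For (a), the key identity arises from combining $is\hat u_k+a_k\partial_x\hat u_k=\hat g_k(\cdot,s)$, with $g:=\partial_tu+a\partial_xu\in W^\gamma$, and $a_kb_{jk}=c_{jk}(a_j-a_k)$ from \reff{eq:1.10}: for $s\ne 0$,
$$
b_{jk}(x)\hat u_k(x,s)=\frac{1}{is}\Bigl(b_{jk}(x)\hat g_k(x,s)-c_{jk}(x)(a_j(x)-a_k(x))\,\partial_x\hat u_k(x,s)\Bigr).
$$
The explicit factor $1/s$ is the mechanism by which multiplication by $b^1$, combined with the characteristic derivative structure of $u\in V^\gamma(a,r)$ and the BV-regularity of $c_{jk}$, produces an operator that increases $t$-regularity. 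An integration by parts in $x$ against the second summand, using $c_{jk}\in BV(0,1)$ so that $\partial_x c_{jk}$ is a bounded measure (with boundary terms absorbed by \reff{eq:1.3}), together with a diagonal extraction in the Fourier index and Rellich's theorem for $H^1(0,1)\hookrightarrow L^2(0,1)$ in each mode, yields the required compactness.

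For (b), Theorem~\ref{thm:1.0} cannot be invoked directly since \reff{le} is not assumed, but its proof adapts: under \reff{ge} and \reff{cond}, the matrix $I-R_s(a,b^0,r)$ is invertible uniformly in $s$, and the explicit characteristic representation of the diagonal ODE (involving $\alpha_j,\beta_j$ from \reff{coef}) gives each $L_s^0:=is+a\partial_x+b^0$ as an isomorphism for $|s|\ge s_0$ with uniform bounds, and Fredholm of index zero for $|s|<s_0$ by classical ODE theory. Aggregating in the $(1+s^2)^\gamma$-weighted norms yields Fredholmness of index zero of $\A(a,b^0)$ from $V^\gamma(a,r)$ to $W^\gamma$; adding the compact operator $\B(b^1)$ preserves both the Fredholm property and the index. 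For (c), since all coefficients depend only on $x$, $\partial_t^k$ commutes with $\A+\B$, so any $u\in\ker(\A+\B)$ has $\partial_t^k u\in\ker(\A+\B)$ for every $k$; since only finitely many modes $L_s$ have nontrivial kernel, $u$ is a trigonometric polynomial in $t$ and lies in every $V^{\gamma'}(a,r)$.

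For (d), the additional regularity $a\in C^{0,1}$ makes $\tilde\A(a,b^0)$ well-defined, and integration by parts in $(x,t)$ gives
$$
\langle(\A(a,b^0)+\B(b^1))u,v\rangle_{L^2}=\langle u,(\tilde\A(a,b^0)+\tilde\B(b^1))v\rangle_{L^2}
$$
for $u\in V^\gamma(a,r)$, $v\in\tilde V^\gamma(a,r)$, with the boundary contributions vanishing by \reff{eq:1.3} and \reff{eq:1.6}. Thus the range of $\A+\B$ is contained in $\ker(\tilde\A+\tilde\B)^\perp$; since the same analysis applied to $\tilde\A+\tilde\B$ shows it is Fredholm of index zero with $\gamma$-independent kernel, equating dimensions gives equality. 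I expect the hardest step to be (a): the combination of only $L^\infty$ data with merely $BV$-regularity of $c_{jk}$ forces the integration by parts to be executed delicately, and the mode-by-mode estimates must be uniformly summable against the anisotropic weight of $W^\gamma$ in $s$.
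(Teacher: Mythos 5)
Your step (a) contains the fatal gap: the operator $\B(b^1)$ is \emph{not} compact from $V^\ga(a,r)$ into $W^\ga$, even under \reff{eq:1.10}. Your identity $b_{jk}\hat u_k=\frac{1}{is}\bigl(b_{jk}\hat g_k-c_{jk}(a_j-a_k)\partial_x\hat u_k\bigr)$ is correct, but the factor $1/s$ is exactly cancelled by the fact that $\partial_x\hat u_k$ is of size $s$ for the oscillatory modes that saturate the $V^\ga$-norm. Concretely, take $u_s:=v_se_s$ with the single nonzero component $v_{s,k}(x)=s^{-\ga}e^{-is\al_k(x)}w_0(x)$, $w_0\in C_c^\infty(1/4,3/4)$: then $\{u_s\}$ is bounded in $V^\ga(a,r)$ (the boundary conditions hold trivially and $\partial_tu_s+a\partial_xu_s$ stays bounded in $W^\ga$), while the images $\B(b^1)u_s$ live on mutually orthogonal Fourier modes and have $W^\ga$-norms bounded below by $\approx\|b_{jk}w_0\|_{L^2}>0$, so no subsequence converges. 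Your proposed remedy --- integrating by parts ``against the second summand'' --- has nothing to integrate against: $b^1u$ is the output of the operator, not a pairing with a test function. And mode-by-mode Rellich compactness plus diagonal extraction never yields compactness of the full operator, because a bounded set in $V^\ga$ has no uniform smallness of the Fourier tails of $\partial_xu$ in the weight needed.

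This is precisely why the paper does \emph{not} decompose $\A+\B$ as ``Fredholm $+$ compact''. Instead it notes that $\A(a,b^0)$ is an isomorphism (Theorem~\ref{thm:1.0} \emph{is} applicable: \reff{le} is a normalization that any fixed data satisfy after shrinking $c$, so your step (b) is solving a non-problem), reduces to $I+\B\A^{-1}$ on $W^\ga$, and shows that the \emph{square} $(\B\A^{-1})^2$ is compact, invoking the criterion that $I+\CC$ is Fredholm whenever $\CC^2$ is compact (Lemma~\ref{lem:4.1}), with the index computed by the homotopy $I+s\B\A^{-1}$. The gain of $1/(1+|s|)$ only materializes at the second application of $\B\A^{-1}$: there the off-diagonal product $b_{kl}u_l^s$ sits inside the integral $\int_0^xe^{is\al_k(y)+\be_k(y)}a_k^{-1}(y)b_{kl}(y)u_l^s(y)\,dy$ with $k\ne l$, where $u_l^s$ oscillates like $e^{-is\al_l}$, transversal to the phase $e^{is\al_k}$; nonstationary phase (an integration by parts legitimized by the $BV$ hypothesis \reff{eq:1.10} on $e^{\be_k}a_lb_{kl}/(a_l-a_k)$) then produces estimate \reff{trick}. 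Your parts (c) and (d) are essentially sound in outline, though in (d) the inclusion $\ker(\A+\B)^*\subseteq\ker(\tilde\A+\tilde\B)$ needs the regularity argument of Lemma~\ref{lem:adj} (showing an abstract dual-kernel element is a finite trigonometric polynomial solving the adjoint ODE boundary value problems) rather than a dimension count; but the proof cannot be repaired without abandoning the claim that $\B(b^1)$ itself is compact.
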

Here we write
\beq \label{eq:1.16}
\langle f,u\rangle_{L^2}:=\frac{1}{2\pi}\int\limits_0^{2\pi}\int\limits_0^{1}
\left\langle f(x,t),u(x,t)\right\rangle\,dxdt
\ee
for the usual scalar product in the Hilbert space $L^2\left((0,1)\times(0,2\pi);\R^n\right)$,
and $\langle \cdot,\cdot\rangle$ denotes the Euclidean scalar product in $\R^n$ (as well as the Hermitian 
scalar product in $\C^n$).

The main tools of the proofs of Theorems \ref{thm:1.0} and  \ref{thm:1.1} are separation of variables 
(cf. (\ref{eq:3.2})--(\ref{eq:3.3})), integral
representation of the solutions of the corresponding boundary value problems 
of the ODE systems (cf. (\ref{eq:3.14})),
and an abstract criterion for Fredholmness (cf. 
Lemma \ref{lem:4.1}).
In the special case $m=1,n=2,a_1(x)=1$, and $a_2(x)=-1$ Theorem \ref{thm:1.1} was proved in \cite{KR}.\\

Our last results concern the solution behavior of (\ref{abstr}) under small perturbations 
of the data $a$ and $b$ and  under arbitrary perturbations of $f$. In order to describe this
we use the following notation for the corresponding open balls (for $\varepsilon >0$):
\begin{eqnarray*}
A_\varepsilon(a):=\left\{\tilde{a} \in BV\left((0,1);\M_n\right): \tilde{a}=\mbox{diag}( \tilde{a}_1,\ldots, \tilde{a}_n),
\max_{1\le j\le n}\|\tilde{a}_{j}-a_{j}\|_\infty
<\varepsilon\right\},\\
B_\varepsilon^{\infty}(b):=\left\{\tilde{b} \in  L^\infty((0,1);\M_n): 
\max_{1\le j,k\le n} \|\tilde{b}_{jk}-b_{jk}\|_{\infty}<\varepsilon\right\},\\
B_\varepsilon(b):=\left\{\tilde{b} \in B_\varepsilon^{\infty}(b):\; 
\tilde{b}_{jk} \in BV(0,1) \mbox{ for all } 1 \le j \not= k \le n\right\}.
\end{eqnarray*}
The set $B_\eps^\infty(b)$ is open in the Banach space $L^\infty((0,1);\M_n)$. The sets 
$A_\eps(a)$ and  $B_\eps(b)$ will be considered as  open sets in the (not complete) normed vector spaces 
$\left\{\tilde{a} \in BV\left((0,1);\M_n\right): \tilde{a}=\mbox{diag}( \tilde{a}_1,\ldots, \tilde{a}_n)\right\}$ and 
$
\{\tilde{b} \in  L^\infty((0,1);\M_n): \;
\tilde{b}_{jk} \in BV(0,1) \mbox{ for all } 1 \le j \not= k \le n\}
$, equipped with the corresponding $L^\infty$-norms.

The solution behavior of (\ref{abstr}) under small perturbations 
of $b$ and $f$ follows directly from Theorem~\ref{thm:1.1} and the Implicit Function Theorem, because the map
\begin{equation}
\label{map}
b \in L^\infty\left((0,1);\M_n\right) \mapsto \left(\A(a,b^0),\B(b^1)\right) 
\in {\cal L}\left(V^{\gamma}(a,r);W^{\gamma}\right) \times {\cal L}(W^{\gamma})
\end{equation}
is affine and continuous:

\begin{cor}\label{corr}
Suppose  (\ref{ge}), (\ref{cond}) for some $c>0$, (\ref{eq:1.10}), and 
\begin{equation}
\label{dim}
\dim \ker (\A(a,b^0)+\B(b^1))=0. 
\end{equation}
Then there exists  $\varepsilon >0$ such that for all $\ga \ge 1$,
$\tilde{b} \in B_\varepsilon^\infty(b)$,  and
$f\in W^\ga$
there exists exactly one $u \in V^\ga(a,r)$ with 
$
\A(a,\tilde{b}^0)u+\B(\tilde{b}^1)u=f.
$
Moreover, the map 
$$
(\tilde{b},f) \in B_\varepsilon^\infty(b)\times W^\ga   
\mapsto u \in V^{\gamma}(a,r)
$$  
is $C^\infty$-smooth.
\end{cor}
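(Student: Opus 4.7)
The plan is to derive the corollary from Theorem~\ref{thm:1.1} by a soft functional-analytic argument, exploiting that the dependence of the operator on $b$ is affine and that the unperturbed operator can be made invertible. Concretely, under hypotheses \reff{ge}, \reff{cond}, \reff{eq:1.10}, Theorem~\ref{thm:1.1}(i) yields that $\A(a,b^0)+\B(b^1)$ is Fredholm of index zero from $V^\ga(a,r)$ into $W^\ga$ for every $\ga\ge 1$; assumption \reff{dim} then upgrades this to an isomorphism, since index zero together with trivial kernel forces invertibility.

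Next I would use that the map \reff{map} is affine and continuous, whence the composition
$$
\tilde{b}\in L^\infty((0,1);\M_n)\mapsto \A(a,\tilde{b}^0)+\B(\tilde{b}^1)\in\LL(V^{\ga}(a,r);W^{\ga})
$$
is affine and continuous as well. Since the set of isomorphisms is open in $\LL(V^\ga(a,r);W^\ga)$ and inversion is real-analytic on it, there exists $\varepsilon>0$ such that for every $\tilde{b}\in B_\varepsilon^\infty(b)$ the perturbed operator $\A(a,\tilde{b}^0)+\B(\tilde{b}^1)$ is still an isomorphism from $V^\ga(a,r)$ onto $W^\ga$. It is worth stressing that this step does \emph{not} require $\tilde{b}$ to satisfy the BV condition \reff{eq:1.10}: Fredholmness is used only at the reference point $b$, while mere $L^\infty$-closeness suffices at the perturbed value. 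Existence and uniqueness of the solution then follow as
$$
u=\bigl(\A(a,\tilde{b}^0)+\B(\tilde{b}^1)\bigr)^{-1}f.
$$

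Finally, the $C^\infty$-smoothness of $(\tilde{b},f)\mapsto u$ comes from composing three smooth maps: the affine assignment $\tilde{b}\mapsto\A(a,\tilde{b}^0)+\B(\tilde{b}^1)$, the analytic inversion on the open set of isomorphisms, and the bilinear continuous evaluation $(T,f)\mapsto Tf$. Equivalently, one can invoke the Implicit Function Theorem on the (multilinear-affine, hence $C^\infty$) map $F(\tilde{b},f,u):=\A(a,\tilde{b}^0)u+\B(\tilde{b}^1)u-f$, whose partial derivative with respect to $u$ at $(b,f,u_0)$ is precisely the isomorphism from the first step. I do not foresee a genuine obstacle: all the hard analysis has already been carried out inside Theorem~\ref{thm:1.1}, and the only subtle remark is the one above about the BV regularity of the perturbation not being needed, because the perturbation argument never reproves Fredholmness.
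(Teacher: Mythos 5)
Your argument is correct and is essentially the paper's own reasoning: the paper provides no separate proof of Corollary~\ref{corr}, only the preceding remark that it follows from Theorem~\ref{thm:1.1} together with the affine, continuous dependence of the operators on $b$ (the map \reff{map}) and the Implicit Function Theorem, and your proposal simply spells out those three ingredients (Fredholm of index zero, trivial kernel forcing invertibility, openness of the set of isomorphisms with analytic inversion). Your explicit observation that the BV condition \reff{eq:1.10} is needed only at the unperturbed $b$ and not at $\tilde{b}$ is correct, and is precisely what permits taking $\tilde{b}\in B_\varepsilon^\infty(b)$ rather than $B_\varepsilon(b)$; the paper makes the same point in Remark~\ref{rem:perturbthm}.
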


In particular, Corollary \ref{corr} implies assertion (I) above, and, because of the continuous embedding 
$V^{\gamma}(a,r) \hookrightarrow C\left([0,1]\times[0,2\pi];\R^n\right)$ for $\ga >3/2$
(see Lemma~\ref{lem:2.2}(iii)), also assertion (II).\\

The solution behavior of (\ref{abstr}) under small perturbations of  $a$ and $r$ seems to be more complicated.
Under those perturbations the function spaces  $V^{\gamma}(a,r)$ change, in general. This makes them inappropriate. 
On the other hand, we don't know any Fredholmness results for the operator $\A(a,b^0)+\B(b^1)$ besides that which is 
described in Theorem~\ref{thm:1.1} and, hence, which is related to the choice of the function spaces $V^\ga(a,r)$ and $W^\ga$.

\begin{thm}\label{thm:1.2}
Suppose (\ref{dim}) and
\begin{equation}\label{ge1}
a_j \in BV(0,1), b_{jj} \in L^\infty(0,1), \mbox{ and } 
\inf|a_j| > 0 \; \mbox{ for all } j=1,\ldots,n,
\end{equation}
\begin{equation}
\label{ungleich}
b_{jk} \in BV(0,1) \mbox{ and } \inf |a_j-a_k|>0 \mbox{ for all } 1\le j\not=k \le n
\end{equation}
and
\beq
\label{condunif}
\sum_{j,k=m+1}^n \sum_{l=1}^m e^{2(\be_j(1)-\be_l(1))}|r_{jl}^1r_{lk}^0|^2<1.
\ee
Then  there exists $\varepsilon>0$ such that for all $\ga\ge 2$,
 $\tilde{a} \in A_\eps(a)$, $\tilde{b} 
\in  B_\varepsilon(b)$, and $f\in W^\ga$
there exists exactly one $u \in V^\ga(a,r)$ with 
$\A(\tilde{a},\tilde{b}^0)u+\B(\tilde{b}^1)u=f.$
Moreover, the map 
\begin{eqnarray}
\label{dts}
\lefteqn{
(\tilde{a},\tilde{b},f) \in A_\eps(a) 
\times  B_\varepsilon(b)\times W^\ga}  \nonumber \\&&
\mapsto u \in W^{\ga-k-1}\cap C\left([0,1]\times[0,2\pi];\R^n\right)
\end{eqnarray}
is $C^k$-smooth for all nonnegative integers $k\le \ga-1$.
\end{thm}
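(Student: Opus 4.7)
The plan is to reduce the perturbed problem to Theorems~\ref{thm:1.0} and~\ref{thm:1.1} applied with $(\tilde a,\tilde b)$ in place of $(a,b)$, and to extract smoothness from the explicit integral representation along characteristics underlying those proofs.

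I would first verify that for $\eps$ sufficiently small and every $(\tilde a,\tilde b)\in A_\eps(a)\times B_\eps(b)$ the hypotheses of Theorem~\ref{thm:1.1} persist. Condition (\ref{ge}) is immediate. For (\ref{cond}), assumption (\ref{condunif}) yields $\|R_s(a,b^0,r)\|<1$ uniformly in $s\in\Z$ because the factors $e^{is(\al_j(1)-\al_l(1))}$ have modulus one, and the uniform bound is stable under $L^\infty$-perturbations, so $|\det(I-R_s(\tilde a,\tilde b^0,r))|$ stays bounded away from zero. For (\ref{eq:1.10}), assumption (\ref{ungleich}) lets one set $\tilde c_{jk}:=\tilde b_{jk}/(\tilde a_j-\tilde a_k)\in BV(0,1)$.

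The crux is then to show that (\ref{dim}) is an open condition in $(\tilde a,\tilde b)$. A direct implicit-function argument is blocked because the space $V^\ga(\tilde a,r)$ depends on $\tilde a$, so I would proceed by contradiction: assume $(\tilde a_n,\tilde b_n)\to(a,b)$ with unit-norm kernel elements $u_n\in V^\ga(\tilde a_n,r)$; use Theorem~\ref{thm:1.0} to deduce a uniform bound on $\|u_n\|_{U^\ga(\tilde a_n)}$; extract a subsequence converging in $W^0$ via the uniform $W^\ga$-bound in the time variable combined with the uniform $W^0$-bound on $\partial_xu_n$; and pass to the limit in the equation, placing the limit into $\ker(\A(a,b^0)+\B(b^1))$, which is trivial by (\ref{dim}), contradicting the normalization. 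Hence $\A(\tilde a,\tilde b^0)+\B(\tilde b^1)$ is an isomorphism from $V^\ga(\tilde a,r)$ onto $W^\ga$ for all nearby $(\tilde a,\tilde b)$, giving a unique $u\in V^\ga(\tilde a,r)$; a short regularity bootstrap on the defect $(\tilde a-a)\partial_xu$ then identifies $u$ as an element of $V^\ga(a,r)$.

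For the smoothness of the data-to-solution map I would rely on the explicit construction of Theorems~\ref{thm:1.0} and~\ref{thm:1.1}. Separation of variables in $t$ reduces the PDE to a family of ODE boundary value problems parametrised by $s\in\Z$, and the integral representation (\ref{eq:3.14}) writes each Fourier coefficient $u_s$ as an explicit transform of $f_s$ whose kernel depends on the data only through the scalar exponentials $\exp(-is\tilde\al_j(x)-\tilde\be_j(x))$ and through the inverse $(I-R_s(\tilde a,\tilde b^0,r))^{-1}$, with $\tilde\al_j,\tilde\be_j$ given by (\ref{coef}). Dependence on $\tilde b$ is polynomial and generates no powers of $s$, which accounts for the $C^\infty$-smoothness observed in Corollary~\ref{corr}. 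Each differentiation in $\tilde a$, however, brings down one factor of $s$ from the phase $-is\tilde\al_j(x)$, so $k$ derivatives contribute $s^k$ pointwise in $s$. In the weighted norm (\ref{eq:1.12}) this absorbs exactly $k$ powers of $(1+s^2)^{1/2}$, and the additional loss of one more order, yielding the target $W^{\ga-k-1}\cap C([0,1]\times[0,2\pi];\R^n)$, supplies the summability needed to upgrade pointwise G\^ateaux estimates to genuine $C^k$-Fr\'echet continuity. The principal obstacle throughout is the openness argument for (\ref{dim}): since $V^\ga(\tilde a,r)$ moves with $\tilde a$, no Neumann/open-mapping perturbation in a fixed Banach space applies, and the compactness-by-contradiction just sketched, which relies crucially on (\ref{condunif}) for uniform-in-$s$ control and on (\ref{ge1}), (\ref{ungleich}) for $BV$-regularity of the $\tilde c_{jk}$, is the genuinely new input beyond the machinery of Theorems~\ref{thm:1.0} and~\ref{thm:1.1}.
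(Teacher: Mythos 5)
Your argument for the openness of condition \reff{dim} under perturbations of $(\tilde a,\tilde b)$ — a compactness-by-contradiction using the uniform bound on $\A(\tilde a,\tilde b^0)^{-1}$ from Theorem~\ref{thm:1.0} together with passage to a limiting kernel element — is essentially the argument the paper runs in Lemma~\ref{isomorph}, and it is sound. The paper phrases it at the level of the operator $I+\B\A^{-1}$ on the fixed space $W^\ga$ (so that the compactness of $(\B\A^{-1})^2$ and a Lipschitz estimate for $(\tilde a,\tilde b^0)\mapsto \A(\tilde a,\tilde b^0)^{-1}\in\LL(W^\ga;W^{\ga-1}\cap C)$, Lemma~\ref{continuous}, can be invoked directly), but the idea is the same.

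The smoothness half is where the proposal has a genuine gap. You plan to differentiate the explicit Fourier representation \reff{eq:3.14} in the data, but \reff{eq:3.14} is the solution formula for the decoupled diagonal problem $\A(a,b^0)u=f$ only; it does not represent $\left(\A(a,b^0)+\B(b^1)\right)^{-1}$. The off-diagonal coupling by $b^1$ mixes the Fourier modes in $x$ and has no closed-form kernel; indeed, the absence of such a formula is precisely why the paper proves Theorem~\ref{thm:1.1} by an abstract Fredholm criterion rather than by direct estimates. Your reading of Corollary~\ref{corr} is also off: its $C^\infty$-smoothness in $\tilde b$ does not come from ``no powers of $s$'' in any Fourier kernel, but from the fact that $\tilde b$ does not enter the domain space $V^\ga(a,r)$, so the map $\tilde b\mapsto\left(\A(a,\tilde b^0),\B(\tilde b^1)\right)$ is affine and continuous between \emph{fixed} Banach spaces and the implicit function theorem applies. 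For the $a$-dependence the paper instead works abstractly: it derives the identity $\left(\partial_a\A(\tilde a,\tilde b^0)\bar a\right)u=-\bar a\,\partial_x u=\bar a\,\tilde a^{-1}\left(\partial_t u+\tilde b^1 u-f\right)$ (formula \reff{barA}), which exhibits each $a$-derivative of the solution as $\left(\A+\B\right)^{-1}$ applied to an object that lives one order lower ($\partial_t\hat u\in W^{\ga-1}$), then combines this with the Lipschitz continuity of $\left(\tilde a,\tilde b\right)\mapsto\left(\A(\tilde a,\tilde b^0)+\B(\tilde b^1)\right)^{-1}\in\LL(W^\ga;W^{\ga-1}\cap C)$ (Lemma~\ref{continuous1}) to get $C^1$-smoothness into $W^{\ga-2}\cap C$, and bootstraps by the chain rule from the closed formulas \reff{1}--\reff{3}. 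Your bookkeeping ``$k$ derivatives cost $s^k$, plus one more for summability'' correctly predicts the final loss of $k+1$ orders, but a proof along those lines would require first turning the full resolvent into a convergent $s$-wise expansion (e.g.\ a Neumann series in $\B\A^{-1}$ with uniform-in-$s$ control of each term and its $a$-derivatives), and that step is not sketched and is nontrivial.
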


In particular, for $k=\ga-1$ (rsp. $k=\ga-2$)  we get assertion (III) above.\\

The present paper has been motivated mainly by two reasons:

The first reason is that the Fredholm property
of the linearization 
is  a key for many local investigations for nonlinear equations, such as small periodic forcing of stationary solutions 
to nonlinear autonomous problems (see, e.g. \cite{RePe})
or Hopf bifurcation (see, e.g. \cite{Ki,KR1}). In particular, those techniques are well established for 
nonlinear ODEs  and nonlinear parabolic PDEs, but almost nothing is known if those techniques 
work for nonlinear dissipative hyperbolic PDEs.

The second reason are applications to 
semiconductor laser dynamics \cite{LiRadRe,Rad,RadWu}.
Phenomena like Hopf bifurcation (describing the appearance of selfpulsations of lasers) 
and  periodic forcing of stationary solutions   
(describing the modulation of stationary laser states by time periodic electric pumping)
are essential for many applications of  semiconductor 
laser devices in communication systems (see, e.g., \cite{RadWu}).

Remark that our smoothness assumptions concerning $a_j$, $b_{jk}$, and $f_j(\cdot,t)$ are quite weak.
This is important for the applications to laser dynamics. But it turns out that
any stronger smoothness assumption with respect to the space variable $x$ would not essentially improve our results and would not simplify
the proofs.

Boundary value problems for hyperbolic systems of the type \reff{eq:1.1}, \reff{eq:1.3} are also used for modeling of correlated random walks
(see, e.g.  \cite{hillen,hiha,horst,Lutscher}).
\\

Our paper is organized as follows: 
In Subsection~\ref{sec:remarks} we comment about sufficient conditions for the key assumptions
\reff{cond},  \reff{eq:1.10}, \reff{dim}, and \reff{condunif} and about the question if those 
conditions as well as the assertions of  Theorems \ref{thm:1.0} and \ref{thm:1.1} are stable under small perturbations of the data.
In Section~\ref{sec:spaces} we introduce the main properties of the function spaces, used in this paper.
In Section~\ref{sec:iso} we prove  Theorem \ref{thm:1.0},
in Sections~\ref{sec:fredh} and  \ref{sec:index}   we prove Theorem \ref{thm:1.1}, and,
finally, in Section~\ref{sec:stability}   we prove Theorem \ref{thm:1.2}.

\subsection{Some comments}\label{sec:remarks}

\begin{rem}\label{rem:denom} {\bf about small denominators:}
\rm{
In Section \ref{sec:iso} we show the following: If one considers system \reff{eq:1.1}--\reff{eq:1.3} with vanishing nondiagonal coefficients, i.e.
with $b_{jk}=0$ for $j\not=k$, and if one makes a Fourier series ansatz for the solution, one ends up with 
linear algebraic systems for the vector valued Fourier coefficients. The system for the Fourier coefficient of order $s$ is uniquely solvable 
if and only if $\det(I-R_s(a,b^0,r))\not= 0$. In this case  $\det(I-R_s(a,b^0,r))$ appears in the demoninator of the
formula for the  Fourier coefficient.
The  condition \reff{cond} implies that the denominators are uniformly bounded from below, thereby
ensuring the convergence of the Fourier series. Using classical terminology, one can say that  \reff{cond}
allows us to avoid small denominators. 
}
\end{rem}

\begin{rem}\label{rem:sufficient} {\bf about the case \boldmath$m=1,\;n=2$\unboldmath: }
{\rm  In the case 
 $m=1, n=2$ the matrix $R_s(a,b^0,r)$ is the complex  number 
$$
R_s(a,b^0,r)=e^{is(\alpha_2(1)-\alpha_1(1))+\beta_2(1)-\beta_1(1)}r_{21}^1 r_{12}^0.
$$
Hence, in this case condition \reff{cond} is equivalent to
$$
e^{\beta_2(1)-\beta_1(1)}r_{21}^1 r_{12}^0 \not= 1.
$$
This fact was proved in our paper \cite{KR}.
For the cases  $n-m>1$ we don't know any  $s$-independent equivalent of condition \reff{cond}.
}
\end{rem}

\begin{rem}\label{rem:sufficient1} {\bf about a sufficient condition for \reff{cond}: }
{\rm 
Let us formulate, for general $m$ and $n$, a sufficient condition for  \reff{cond}, in which the parameter $s$
does not appear. 
Condition \reff{cond} is satisfied iff for all 
$s\in\Z$ the matrix $I-R_s(a,b^0,r)$ is invertible  and 
the operator norm $\|(I-R_s(a,b^0,r))^{-1}\|$ is bounded uniformly in $s\in\Z$. 
For that it is sufficient  to have 
\beq
\label{suf}
\|R_s(a,b^0,r)\|\le \mbox{const}<1 \mbox{ for all } s\in\Z.
\ee
Here we can use any operator norm in $\M_{n-m}$, corresponding to any norm in $\R^{n-m}$. If we take the Euclidean norm in 
$\R^{n-m}$, then the corresponding operator norm  in $\M_{n-m}$ can be estimated by the Euclidean norm in  $\M_{n-m}$.
In other words: \reff{suf} and, hence, \reff{cond} are satisfied if, for example, condition \reff{condunif} is satisfied.
This can be interpreted as a kind of control on small denominators via  parameters $a$, $b^0$, and $r$.
}
\end{rem}

\begin{ex}\label{ex:correlated} {\bf about a correlated random walk model: }
{\rm 
In the case  $m=1, n=2$ 
the sufficient for \reff{cond} condition \reff{suf} reads as
\beq
\label{suf2}
|r_{21}^1 r_{12}^0| \exp \int_0^1\left(\frac{b_{22}(x)}{a_2(x)}-\frac{b_{11}(x)}{a_1(x)}\right)dx<1.
\ee
Consider the following correlated random walk model
for chemotaxis (chemosensitive movement, see ~\cite{hirolu,segel}),
consisting of the hyperbolic system
$$
\left.
\begin{array}{rcl}
\partial_tu^+  + \partial_x\left(a^+(x)u^+\right)& =& -\mu^+(x)u^+ + \mu^-(x)u^-, \\
\partial_tu^-  - \partial_x\left(a^-(x)u^-\right)& =& -\mu^-(x)u^- + \mu^+(x)u^+, 
\end{array}
\right\}
x \in (0,1)
$$
with ``natural'' boundary conditions
$$
a^+(x)u^+(x,t) = a^-(x)u^-(x,t),\quad x=0,1.
$$
Translating the new notation to the old one, we get
$$
a_1=a^+, \;a_2=-a^-, \;b_{11}=\mu^++\partial_xa^+, \;b_{22}=\mu^--\partial_xa^-
$$
and 
$$
r_{12}^0=\frac{a^-(0)}{a^+(0)}, \;r_{21}^1=\frac{a^+(1)}{a^-(1)}.
$$
Therefore
$$
\exp\int_0^1\frac{b_{11}(x)}{a_1(x)} dx=\exp\int_0^1\frac{\mu^+(x)+\partial_xa^+(x)}{a^+(x)} dx
=\frac{a^+(1)}{a^+(0)} \exp\int_0^1\frac{\mu^+(x)}{a^+(x)}dx
$$
and analogously
$$
\exp\int_0^1\frac{b_{22}(x)}{a_2(x)} dx=\frac{a^-(0)}{a^-(1)} \exp\int_0^1\frac{\mu^+(x)}{a^+(x)}dx.
$$
Hence, condition \reff{suf2} is 
\beq\label{ne}
\int_0^1\left(\frac{\mu^+(x)}{a^+(x)}+\frac{\mu^-(x)}{a^-(x)}\right)\,dx > 0.
\ee
}
\end{ex}

\begin{rem}\label{rem:perturbcond} {\bf about small perturbations of the data in \reff{cond} and \reff{condunif}: }
{\rm
Let us comment about the behavior of the assumption \reff{cond} and its sufficient condition
\reff{condunif}
under small perturbations of the data.

If condition \reff{condunif} is satisfied for given data,
then it remains to be satisfied under sufficiently small perturbations of the coefficients $r_{jk}^0, r_{jk}^1$ and under 
sufficiently small (in $L^\infty(0,1)$) perturbations of the coefficient functions $a_j$ and $b_{jk}$.

If condition \reff{cond} is satisfied, then it remains to be satisfied under sufficiently small perturbations of $r_{jk}^0, r_{jk}^1$,
and $b_{jk}$, but not under small perturbations of  $a_j$, in general.
In other words,  \reff{cond} is not sufficient for \reff{condunif}.
It may happen that there exist arbitrarily small perturbations of $a_j$ that destroy the validity of
\reff{cond}:  

For example, consider the case $m=1, n=2, 
a_1(x)=\al, a_2(x)=-\al$, $b_{jk}(x)=0$ for $j,k=1,2$, $r_{1,2}^0=1, r_{2,1}^1=-1$.
Then \reff{cond} reads as 
\beq
\label{cond1}
|1+e^{\frac{2is}{\al}}| \ge \mbox{ const } >0 \mbox{ for all } s \in \Z.
\ee
This is satisfied iff
\beq
\label{I}
\al =\frac{2l+1}{k\pi} \mbox{ with } k \in \Z \mbox{ and } l \in \N.
\ee
In this case the set of all values $\al$ such that condition (\ref{cond}) is satisfied, is dense in $\R$, but 
the set of all values $\al$ such that (\ref{cond}) is not satisfied, is dense too.
}
\end{rem}

\begin{rem}\label{rem:perturbthm} {\bf about Fredholmness of  \boldmath${\cal A}(a,b^0)+{\cal B}(b^1)$\unboldmath
under small perturbations of the data: }
{\rm 
Let us comment about the behavior of the conclusions  of Theorem \ref{thm:1.1}, mainly the Fredholmness of 
the operator ${\cal A}(a,b^0)+{\cal B}(b^1)$,
under small perturbations of the data.

Suppose that for  given data $a$ and $b$ 
the assumptions of Theorem \ref{thm:1.1}
are satisfied. Then, under   sufficiently small perturbations of 
$b_{jk}$ in $L^\infty(0,1)$, independently whether  \reff{eq:1.10} remains to be true or not,
the Fredholmness of 
${\cal A}(a,b^0)$
survives because the map \reff{map} is continuous and because
the set of index zero Fredholm operators between two fixed Banach spaces is open. 

But if $a_j$, $r^0_{jk}$, or $r^1_{jk}$  are perturbed, then the function space $V^\ga(a,r)$ 
is changed, in general, and it may happen that there
exist arbitrarily small perturbations that destroy the Fredholmness: 

For example, consider again the case $m=1, n=2, 
a_1(x)=\al, a_2(x)=-\al, f(x)=0, b_{jk}(x)=0$ for $j,k=1,2$, $r_{1,2}^0=1, r_{2,1}^1=-1$.
Then \reff{cond} reads as \reff{cond1} which is equivalent to \reff{I}.
Hence, by Theorem \ref{thm:1.1}, if \reff{I} is true, then $\A(a,b^0)$ is Fredholm.
Condition \reff{I} and, hence, condition \reff{cond1} is not satisfied, for example, if
\beq
\label{II}
\al =\frac{2q}{(2p+1)\pi} \mbox{ with } p,q \in \N,
\ee
and in this case $\A(a,b^0)$ is not Fredholm because $\dim \ker \A(a,b^0) =\infty$:
Indeed, we have $(u_1,u_2) \in \ker \A(a,b^0)$ iff
\beq\label{III}
\partial_tu_1  + \al\partial_xu_1 = \partial_tu_2  - \al\partial_xu_2 =0, \;
x\in[0,1],  \; t \in \R, 
\ee
\beq\label{IV}
u_j(x,t+2\pi) = u_j(x,t),\; j=1,2,\;  x\in[0,1], \;t \in \R,
\ee
\beq
\label{IVa}
u_1(0,t) = u_2(0,t),\; u_2(1,t) = -u_1(1,t),
\; t \in \R.
\ee
The solutions of \reff{III} are of the type $u_1(x,t)=U_1\left(t-\frac{x}{\al}\right)$ and  $u_2(x,t)=U_2\left(t+\frac{x}{\al}\right)$. They satisfy \reff{IV} 
iff the functions $U_1$ and $U_2$ are $2\pi$-periodic. From the boundary condition in $x=0$ follows $U_1=U_2$, and, hence, the
boundary condition in $x=1$ reads as 
\beq
\label{V}
U_1\left(t-\frac{1}{\al}\right)=-U_1\left(t+\frac{1}{\al}\right).
\ee
Choosing
$
U_1(y)=\sin (r y), \; r \in \Z,
$
and using \reff{II}, condition \reff{V} transforms into
$$
\sin \left(r\left(t-\frac{2p+1}{2q}\pi\right)\right)=-\sin \left(r\left(t+\frac{2p+1}{2q}\pi\right)\right).
$$
This is fulfilled, for example, for $r=(2k+1)q$ and any choice of $k \in \Z$, i.e. 
we found infinitely many linearly independent solutions
to \reff{III}--\reff{IVa}.

The set of all values $\al$ of the type (\ref{II}) is dense in $[0,\infty)$. Hence, 
we get: In this case the set of all $\al >0$ such that
$\A(a,b^0)+\B(b^1)$ is Fredholm, is dense in $[0,\infty)$, but the  set of all $\al >0$ such that
$\A(a,b^0)+\B(b^1)$ is not Fredholm, is dense in $[0,\infty)$ too.
}
\end{rem}

\begin{rem}\label{rem:technical} {\bf about assumptions \reff{eq:1.10} and \reff{ungleich}:}
\rm{
Obviously,  the condition \reff{eq:1.10} is not necessary for the conclusions of Theorem \ref{thm:1.1} 
because the  conclusions of Theorem \ref{thm:1.1}  survive under small (in $L^\infty(0,1)$) perturbations of
the coefficients $b_{jk}$, but the   assumption  \reff{eq:1.10} does not, in general.

The following example shows that  Theorem \ref{thm:1.1} is not true, in general, if all its assumptions are fulfilled
with the exception of (\ref{eq:1.10}): Take $m=1,n=2, a_1(x)=a_2(x)=1, b_{11}(x)= b_{12}(x)= b_{22}(x)=f_1(x,t)=f_2(x,t)=0, b_{21}=b=$const.
Then (\ref{eq:1.1})--(\ref{eq:1.3}) looks like
\begin{eqnarray*}
\partial_tu_1  + \partial_xu_1  = 
\partial_tu_2  + \partial_xu_2 + bu_1  &=& 0,\\
u_1(x,t+2\pi)- u_1(x,t)=u_2(x,t+2\pi)-u_2(x,t)&=&0,\\
u_1(0,t) - r^0_{12}u_2(0,t)= 
u_2(1,t)-r^1_{21}u_1(1,t)&=&0.
\end{eqnarray*}
If  $r^0_{12}r^1_{21}<1$ and $b \not= 0$, then all  assumptions of Theorem \ref{thm:1.1} are fulfilled
with the exception of (\ref{eq:1.10}). If, moreover,
$$
b=\frac{r^0_{12}r^1_{21}-1}{r^0_{12}},
$$
then
$$
u_1(x,t)=\sin l(t-x), \; u_2(x,t)=b \left(\frac{1}{1-r^0_{12}r^1_{21}}-x\right)\sin l(t-x) , \; l \in \N,
$$
are infinitely many linearly independent solutions. Hence, the conclusion of  Theorem \ref{thm:1.1} is not true.

Finally, let us remark that, surprisingly,  the  assumption \reff{eq:1.10} 
is  used also in quite another circumstances,
for proving the spectrum-determined growth condition in $L^p$-spaces \cite{Guo,Luo,Neves} and in $C$-spaces \cite{Lichtner}
for semiflows generated by hyperbolic systems of the type  \reff{eq:1.1}, \reff{eq:1.3}.
}
\end{rem}

\begin{rem}\label{rem:injektiv} {\bf about sufficient conditions for  \reff{dim}: }
{\rm
Similarly to~\cite{kmit}, one can provide a wide range of   sufficient conditions for  \reff{dim}. 
We here concentrate on the physically relevant case
\begin{equation}
\label{kleinr} 
\sum\limits_{j=1}^m\sum\limits_{k=m+1}^n|r_{jk}^0|^2\le 1 \mbox{ and } \sum\limits_{j=m+1}^n\sum\limits_{k=1}^m|r_{jk}^1|^2\le 1.
\end{equation}
If \reff{eq:1.1}--\reff{eq:1.3}  with $f=0$ is satisfied, then
\begin{eqnarray}
\lefteqn{
\label{eq:u1}
0=\int\limits_{0}^{2\pi} \left(u_j^2(1,t)-u_j^2(0,t)\right)\,dt}\nonumber\\
&&+2 \int_0^{2 \pi} \int_0^1  a_j^{-1}(x)\left(b_{jj}(x)u_j^2+\sum_{k\not=j}
b_{jk}(x)u_ju_k\right) dx dt.
\end{eqnarray}
Using the reflection boundary conditions, summing up separately the first $m$ equations of (\ref{eq:u1}) 
and the rest $n-m$ equations 
of (\ref{eq:u1}), and subtracting the second resulting equality  from the first one, we get
\begin{eqnarray}
\label{eq:I}
\int\limits_{0}^{2\pi}\biggl(\sum\limits_{j=1}^mu_j^2(1,t)-\sum
\limits_{j=m+1}^n\left(\sum\limits_{k=1}^mr_{jk}^1u_k(1,t)\right)^2\nonumber\\
+\sum\limits_{j=m+1}^nu_j^2(0,t)-
\sum\limits_{j=1}^m\left(\sum\limits_{k=m+1}^nr_{jk}^0u_k(0,t)\right) ^2 \biggr)dt\nonumber\\
+2 \sum\limits_{j=1}^m\int_0^{2 \pi} \int_0^1  \frac{1}{a_j(x)}\left(b_{jj}(x)u_j^2+\sum_{k\not=j}
b_{jk}(x)u_ju_k\right) dx dt\nonumber\\
-2 \sum\limits_{j=m+1}^n\int_0^{2 \pi} \int_0^1  \frac{1}{a_j(x)}\left(b_{jj}(x)u_j^2+\sum_{k\not=j}
b_{jk}(x)u_ju_k\right) dx dt=0.
\end{eqnarray}
Applying H\"older's inequality and assumption (\ref{kleinr}), we derive that
\begin{eqnarray*}
\lefteqn{
\int\limits_{0}^{2\pi}\left(\sum\limits_{j=1}^mu_j^2(1,t)-\sum\limits_{j=m+1}^n
\left(\sum\limits_{k=1}^mr_{jk}^1u_k(1,t)\right)^2\right) dt} \\&&
\ge\left(1-\sum\limits_{j=m+1}^n\sum\limits_{k=1}^m|r_{jk}^1|^2\right)\int\limits_{0}^{2\pi}\sum\limits_{j=1}^mu_j(1,t)^2 dt\ge 0.
\end{eqnarray*}
A similar estimate is true for the second boundary summand in (\ref{eq:I}) as well. Set
\[
c_{jk}(x) := \frac{b_{jk}(x)}{a_j(x)} \mbox{ for } 1 \le j\le m\quad \mbox{ and }\quad c_{jk}(x) 
:= -\frac{b_{jk}(x)}{a_j(x)} \mbox{ for } m+1\le j\le n.
\]
Then  \reff{kleinr} together with 
\begin{eqnarray}
\label{xi}
\sum_{j,k=1}^nc_{jk}(x)\xi_j\xi_k\ge C\sum_{j=1}^n|\xi_j|^2 
\mbox{ for all } \xi\in\R^n \mbox{ and } \mbox{ a.a. } x\in(0,1),
\end{eqnarray}
where the constant $C>0$ does not depend on $\xi$ and $x$, is sufficient for \reff{dim}. 
It is easily seen that estimate (\ref{xi}) is true if, for instance,
\begin{eqnarray*}
\ess\inf\left\{\frac{b_{jj}}{a_j}-\sum\limits_{k\ne j}\left(\left|\frac{b_{jk}}{a_j}\right|
+\left|\frac{b_{jk}}{a_k}\right|\right)\right\}>0 &\mbox{ for all }& j=1,\ldots,m,\\
\label{coef2}
\ess\inf\left\{-\frac{b_{jj}}{a_j}-\sum\limits_{k\ne j}\left(\left|\frac{b_{jk}}{a_j}\right|
+\left|\frac{b_{jk}}{a_k}\right|\right)\right\}>0 &\mbox{ for all }& j=m+1,\ldots,n.
\end{eqnarray*}

Summarizing, we get: In order the main conditions \reff{dim} and \reff{condunif} to be satisfied, it is sufficient that
\reff{kleinr} is fulfilled as well as
\begin{eqnarray*}
\mbox{ess inf } a_j>0 & \mbox{for} & j=1,\ldots,m,\\
\mbox{ess sup } a_j<0 & \mbox{for} & j=m+1,\ldots,n,\\
\mbox{ess inf } b_{jj}>0 & \mbox{for} & j=1,\ldots,n,\\
\mbox{ess sup } |b_{jk}| \approx 0 & \mbox{for} & 1 \le j\not=k \le n.
\end{eqnarray*}
}
\end{rem}

\section{Some properties of the used function spaces}\label{sec:spaces}
\renewcommand{\theequation}{{\thesection}.\arabic{equation}}
\setcounter{equation}{0}

In this section we formulate some properties of the function spaces
$W^{\ga}$, $V^\ga(a,r)$, and  $U^{\ga}(a)$
introduced in Section~1. 
For each $u\in W^{\gamma}$ we have
\begin{equation}\label{eq:2.1}
u(x,t)=\sum\limits_{s\in\Z}u^s(x)e^{ist}\,\,\mbox{with}\,\,
u^s(x):=\frac{1}{2\pi}\int\limits_0^{2\pi}u(x,t)e^{-ist}\,dt,
\end{equation}
where $u^s \in L^2((0,1);\C^n)$, and the series
in~(\ref{eq:2.1}) converges to $u$ in the complexification of
$W^{\gamma}$. And vice versa: For any sequence $(u^s)_{s\in\Z}$
with
\begin{equation}
\label{eq:2.2}
u^s\in L^2((0,1);\C^n),\;\;u^{-s}=\overline{u^{s}},\;\; 
\sum\limits_{s\in\Z}(1+s^2)^{\ga}\|u^s\|_{L^2((0,1);\C^n)}^2<\infty
\end{equation}
there exists exactly one $u\in W^{\ga}$ with~(\ref{eq:2.1}). In what follows, we will identify
functions $u\in W^{\gamma}$ and sequences $\left(u^s\right)_{s\in\Z}$ with~(\ref{eq:2.2})
by means of~(\ref{eq:2.1}), and we will keep 
for the functions and the sequences
 the notations $u$ and $\left(u^s\right)_{s\in\Z}$, respectively.

The following lemma gives a compactness criterion in $W^{\ga}$ (see \cite[Lemma 6]{KR}):

\begin{lemma}\label{lem:2.1}
 A set $M\subset W^{\gamma}$ is precompact in $W^{\gamma}$ if and only if the 
following two conditions are satisfied:

(i) There exists $C>0$ such that for all $u\in M$ it holds
 $$
\sum\limits_{s\in\Z}(1+s^2)^{\gamma}\int\limits_0^1\|u^s(x)\|^2\,dx\le C.
$$

(ii) For all $\eps>0$ there exists $\de>0$ such that for all $\xi,\tau\in(-\de,\de)$ and
all $u\in M$ it holds
$$
\sum\limits_{s\in\Z}(1+s^2)^{\gamma}\int\limits_{0}^1\left\|u^s(x+\xi)e^{is\tau}-u^s(x)\right\|^2\,dx<
\eps,
$$
where $u^s(x+\xi):=0$ for $x+\xi\not\in[0,1]$.
\end{lemma}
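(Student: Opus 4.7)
The plan is to recognize Lemma~\ref{lem:2.1} as a weighted Kolmogorov--Riesz type precompactness criterion on the Hilbert direct sum $\bigoplus_{s\in\Z}(1+s^2)^\ga L^2((0,1);\C^n)$, to which $W^\ga$ is isometrically isomorphic via $u\mapsto(u^s)_{s\in\Z}$. Condition~(i) is just uniform boundedness in this space. Condition~(ii) plays a double role: with $\tau=0$ it encodes uniform $L^2$-equicontinuity in $x$ of each Fourier slice $u^s$, while with $\xi=0$ it controls the $s$-tail through the oscillatory factor $|e^{is\tau}-1|^2$.

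For sufficiency I would extract a convergent subsequence from any sequence in $M$ in two stages. First, a uniform tail estimate should be produced by averaging~(ii) (with $\xi=0$) over $\tau\in(-\de,\de)$: using
$$
\int_{-\de}^{\de}|e^{is\tau}-1|^2\,d\tau = 4\de-\frac{4\sin(s\de)}{s}\ge 2\de
\quad\mbox{whenever } |s\de|\ge 2,
$$
one obtains
$
\sum_{|s|>2/\de}(1+s^2)^\ga\int_0^1\|u^s(x)\|^2\,dx\le\eps
$
uniformly for $u\in M$. Second, for each of the finitely many indices $|s|\le 2/\de$, the family $\{u^s:u\in M\}\subset L^2((0,1);\C^n)$ is bounded by~(i) and $L^2$-equicontinuous in $x$ by~(ii) with $\tau=0$, hence precompact by the classical Riesz--Fréchet--Kolmogorov theorem on a bounded interval. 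A diagonal extraction over these finitely many slices yields a subsequence that is Cauchy in each slice, and the uniform tail bound then promotes it to a Cauchy sequence in $W^\ga$.

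For necessity, (i) is immediate since precompact sets are bounded. For (ii), given $\eps>0$ I would cover $M$ by finitely many $W^\ga$-balls of radius $\eps/9$ centered at points $v^{(1)},\ldots,v^{(K)}$; the triangle inequality reduces the verification to this finite set. For a single $v\in W^\ga$ the tail $\sum_{|s|>N}(1+s^2)^\ga\int_0^1\|v^s(x)\|^2\,dx$ is arbitrarily small once $N$ is large, and on the finite head $|s|\le N$ one combines continuity of translation in $L^2((0,1);\C^n)$ with continuity of $\tau\mapsto e^{is\tau}$ to bound the expression in~(ii) uniformly for small $\xi,\tau$.

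The main obstacle is the tail-decay step in sufficiency: because the weight $(1+s^2)^\ga$ grows in $s$, condition~(i) alone cannot force smallness of the tail, so the oscillatory character of~(ii) must be genuinely exploited. The $\tau$-averaging trick above is the one nontrivial ingredient; once it is in hand, the remainder of the argument reduces to classical $L^2$-compactness on a finite block of Fourier modes plus a standard diagonal extraction.
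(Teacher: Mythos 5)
Your proof is correct. Note that the present paper does not actually contain a proof of Lemma~\ref{lem:2.1}; it refers the reader to \cite[Lemma 6]{KR}, so there is no in-paper argument to compare against. Your strategy — identify $W^\gamma$ with the weighted $\ell^2$-direct sum of Fourier slices, deduce a uniform $s$-tail bound from (ii) with $\xi=0$ by the $\tau$-averaging inequality $\int_{-\delta}^{\delta}|e^{is\tau}-1|^2\,d\tau\ge 2\delta$ for $|s|\ge 2/\delta$, apply the classical Kolmogorov--Riesz criterion to the finitely many remaining slices using (ii) with $\tau=0$, then diagonalize, and for necessity reduce to a finite $\varepsilon$-net and use continuity of translation — is exactly the natural route, and the averaging step is indeed the one nonobvious ingredient, since (i) alone cannot control the tail against the growing weight $(1+s^2)^\gamma$.

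Two small points worth tidying up in a final write-up. First, the radius of the $\varepsilon$-net should be calibrated to the \emph{norm}, not the squared norm: since the estimate in (ii) bounds $\|T_{\xi,\tau}u-u\|_{W^\gamma}^2$ by $\varepsilon$, you want balls of radius on the order of $\sqrt{\varepsilon}/3$ (or, equivalently, balls on which the squared norm is $<\varepsilon/9$); writing ``radius $\varepsilon/9$'' mixes the two. Second, when you invoke Riesz--Fréchet--Kolmogorov on a fixed slice $|s|\le 2/\delta$, you should point out explicitly that (ii) is a \emph{sum} over $s$, so that each individual summand — and in particular the $s$-th one, divided by $(1+s^2)^\gamma$ — is small; since there are only finitely many $s$ in the block this yields a common $\delta'$. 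Neither of these is a gap in the idea, only in the bookkeeping.
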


Concerning the spaces $U^\ga(a)$ we have the following result:

\begin{lemma}\label{lem:2.2}
(i) The space $U^{\gamma}(a)$ is complete.\\
(ii) If $\ga \ge 1$, then for any $x \in [0,1]$ there exists  a continuous trace map
$u \in U^\ga(a) \mapsto u(x,\cdot)
\in L^2\left((0,2\pi);\R^n\right).$\\
(iii) If $\ga > 3/2$, then $U^\ga(a)$ is continuously embedded into $C([0,1] \times [0,2\pi];\R^n)$.
\end{lemma}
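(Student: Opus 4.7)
The plan is to work throughout in the Fourier-in-$t$ expansion $u(x,t)=\sum_{s\in\Z}u^s(x)e^{ist}$, in which the identity $\partial_tu+a\partial_xu=g$ decouples into the family of linear ODEs in $x$
$$
isu_j^s(x)+a_j(x)(u_j^s)'(x)=g_j^s(x),\qquad j=1,\ldots,n,\ s\in\Z.
$$
For (i), take a Cauchy sequence $\{u_n\}\subset U^{\gamma}(a)$; since the two summands in $\|\cdot\|_{U^{\gamma}(a)}$ are each $W^{\gamma}$-norms, completeness of $W^{\gamma}$ gives limits $u_n\to u$ and $g_n:=\partial_tu_n+a\partial_xu_n\to g$ in $W^{\gamma}$. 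Differentiation being continuous in $\D'$, one has $\partial_tu+a\partial_xu=g$ distributionally; and since $\mathrm{ess\,inf}|a_j|\ge c$ gives $a^{-1}\in L^\infty$, the identity $\partial_xu=a^{-1}(g-\partial_tu)\in W^{0}$ holds (using $\partial_tu\in W^{\gamma-1}\subset W^{0}$ for $\gamma\ge 1$). Hence $u\in U^{\gamma}(a)$ and $u_n\to u$ in $\|\cdot\|_{U^{\gamma}(a)}$.

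For (ii), fix $x\in[0,1]$. Since $a_j^{-1}\in L^\infty$, the function $\al_j$ defined in \reff{coef} is absolutely continuous on $[0,1]$, and so is $u_j^s$ because $(u_j^s)'=(g_j^s-isu_j^s)/a_j\in L^2(0,1)$. Applying the product rule a.e.\ to $u_j^se^{is\al_j}$ and integrating produces the representation
$$
u_j^s(x)=e^{is(\al_j(x_0)-\al_j(x))}u_j^s(x_0)+e^{-is\al_j(x)}\int_{x_0}^x\frac{g_j^s(y)}{a_j(y)}e^{is\al_j(y)}\,dy
$$
for every $x,x_0\in[0,1]$. Since $|e^{i\cdot}|=1$ and $|a_j|\ge c$, Cauchy--Schwarz on the integral followed by averaging the estimate over $x_0\in[0,1]$ absorbs $|u_j^s(x_0)|^2$ into $\|u_j^s\|_{L^2(0,1)}^2$ and yields
$$
\|u^s(x)\|^2\le 2\|u^s\|_{L^2}^2+\frac{2}{c^2}\|g^s\|_{L^2}^2.
$$
Multiplying by $(1+s^2)^\gamma$ and summing in $s$ gives $\sum_s(1+s^2)^\gamma\|u^s(x)\|^2\le C_1\|u\|_{U^{\gamma}(a)}^2$ uniformly in $x$; by Parseval the trace $t\mapsto u(x,t)$ belongs to $L^2((0,2\pi);\R^n)$ with a continuity bound.

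For (iii), taking $\sup_x$ on the left in the same pointwise estimate gives $\sup_x\|u^s(x)\|^2\le 2\|u^s\|_{L^2}^2+2c^{-2}\|g^s\|_{L^2}^2$. A Cauchy--Schwarz in $s$ with weights $(1+s^2)^{\pm\gamma/2}$ then yields
$$
\sum_{s\in\Z}\sup_{x\in[0,1]}\|u^s(x)\|\le\Bigl(\sum_{s\in\Z}(1+s^2)^{-\gamma}\Bigr)^{1/2}C_2\|u\|_{U^{\gamma}(a)},
$$
the first factor being finite for $\gamma>1/2$ (a fortiori for $\gamma>3/2$). Each $u^s$ is continuous on $[0,1]$ by absolute continuity, and $e^{ist}$ is continuous in $t$, so the Weierstrass $M$-test delivers uniform convergence of the Fourier series on $[0,1]\times[0,2\pi]$; the limit is therefore continuous with sup-norm bounded by a multiple of $\|u\|_{U^{\gamma}(a)}$. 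The one technical subtlety lies in the integrating-factor derivation in (ii), since $a_j$ is merely in $L^\infty$; but this is handled by the absolute continuity of $\al_j$ and $u_j^s$ together with the product rule for AC functions, after which (iii) reduces to summing the bound of (ii) by Cauchy--Schwarz.
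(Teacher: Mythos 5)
Your argument is correct. For part (i) you take essentially the same route as the paper: completeness of $W^{\gamma}$ gives limits $u$ of $u_n$ and $g$ of $\partial_tu_n+a\partial_xu_n$, and one identifies $\partial_xu=a^{-1}(g-\partial_tu)\in W^{\gamma-1}\subseteq W^{0}$. One small caution there: the phrase ``differentiation is continuous in $\D'$'' does not by itself give $a\partial_xu_n\to a\partial_xu$ in $\D'$, because multiplication by an $L^\infty$ coefficient is not $\D'$-continuous (one cannot move $a$ onto the test function). The clean ordering, which is the one the paper uses, is to note first that $\partial_tu_n$ is Cauchy in $W^{\gamma-1}$, hence $a\partial_xu_n=g_n-\partial_tu_n$ and therefore $\partial_xu_n$ are Cauchy in $W^{\gamma-1}$, and only then to identify the limits with $\partial_tu$ and $\partial_xu$ by testing against smooth compactly supported functions; your final formula is exactly the outcome of that argument, so this is a presentational point rather than a gap. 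For parts (ii) and (iii) the paper gives no proof at all, deferring to Lemma 8 and Remark 9 of \cite{KR}; your integrating-factor representation of $u^s$ along characteristics, the averaging over the base point $x_0$ to absorb $|u^s(x_0)|$ into $\|u^s\|_{L^2(0,1)}$, and the weighted Cauchy--Schwarz in $s$ constitute a complete, self-contained replacement, and it is the same circle of ideas (cf.\ the representation \reff{eq:3.7} used in Section \ref{sec:iso}). It is worth pointing out that your estimate $\sup_x\|u^s(x)\|^2\le 2\|u^s\|_{L^2}^2+2c^{-2}\|g^s\|_{L^2}^2$ costs nothing in $s$, so your proof of (iii) actually establishes the continuous embedding $U^\gamma(a)\hookrightarrow C([0,1]\times[0,2\pi];\R^n)$ already for $\gamma>1/2$, which is stronger than the stated threshold $\gamma>3/2$; this is not an error, it merely reflects that the $U^\gamma(a)$-norm controls the characteristic derivative $\partial_tu+a\partial_xu$ in $W^\gamma$ and hence gives uniform-in-$x$ control of every Fourier coefficient.
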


\begin{proof}
{\bf $(i)$}
Let $(u^{k})_{k\in\N}$ be a fundamental sequence
in $U^{\gamma}(a)$.
Then $(u^k)_{k\in\N}$ and $(\d_tu^k+a\d_xu^k)_{k\in\N}$
are fundamental sequences in $W^{\gamma}$.  This implies that $(\d_tu^k)_{k\in\N}$ and, hence,
$(a\d_xu^k)_{k\in\N}$ are fundamental sequences in $W^{\gamma-1}$. On the account of
$a_j \in L^\infty\left(0,1\right)$ and $\mbox{ess inf } |a_j|>0$ 
for all $j=1,\ldots,n$, the latter entails that $(\d_xu^k)_{k\in\N}$ is a fundamental sequence in $W^{\gamma-1}$
as well. Because $W^{\gamma}$ is complete for any $\ga$,
there exist
$u\in W^{\gamma}$ and $v,w\in W^{\gamma-1}$ such that
$$
u^k\to u \mbox{ in } W^{\gamma},\quad \d_tu^k\to v \mbox{ in } W^{\gamma-1}, \quad 
\d_xu^k\to w \mbox{ in } W^{\gamma-1} \quad \mbox{ as } k\to\infty.
$$
It is obvious that $\d_tu=v$ and $\d_xu=w$ in the sense of the generalized derivatives:
 Take a smooth function $\vphi: (0,1)\times\left(0,2\pi\right)\to\R^n$ with compact
support. Then
\begin{eqnarray*}
&\displaystyle
\int\limits_{0}^{2\pi}\int\limits_{0}^1\langle u,\d_t\vphi\rangle\,
dx\,dt=
\lim\limits_{k\to\infty}\int\limits_{0}^{2\pi}\int\limits_{0}^1
\langle u^k,\d_t\vphi\rangle\,dx\,dt&\\
&\displaystyle
=-\lim\limits_{k\to\infty}\int\limits_{0}^{2\pi}\int\limits_{
0}^1
\langle\d_tu^k,\vphi\rangle\,dx\,dt=-
\int\limits_{0}^{2\pi}\int\limits_{0}^1
\langle v,\vphi\rangle\,dx\,dt,&
\end{eqnarray*}
and similarly for $\d_x u$ and $w$. Hence $\d_tu+a\d_xu=v+aw$ in  $W^{\gamma-1}$. Since $(\d_tu^k+a\d_xu^k)_{k\in\N}$
is fundamental in $W^{\gamma}$, then $\d_tu+a\d_xu=v+aw$ in  $W^{\gamma}$ as desired.

Properties {\bf$(ii)$} and {\bf$(iii)$} can be proved similarly to \cite[Lemma 8 and Remark 9]{KR}.
\end{proof}

Now, let us consider the dual spaces $\left(W^{\ga}\right)^*$.

Obviously, for any $\ga \ge 0$ the spaces $W^{\ga}$ are densely 
and continuously embedded into the Hilbert space 
$L^2\left((0,1) \times(0,2\pi);\R^n\right)$.
 Hence, there is a canonical dense continuous embedding
\beq
\label{eq:2.4}
L^2\left((0,1) \times(0,2\pi);\R^n\right) \hookrightarrow \left(W^{\ga}\right)^*: 
[u,v]_{W^{\ga}} = \langle u,v \rangle_{L^2}
\ee
for all $u \in L^2\left((0,1) \times(0,2\pi);\R^n\right)$ and   $v \in W^{\ga}$.
Here $[\cdot,\cdot]_{W^{\ga}}:(W^{\ga})^* \times W^{\ga} \to \R$ is the dual pairing,
and $\langle \cdot,\cdot \rangle_{L^2}$ is the scalar product introduced in~(\ref{eq:1.16}).

Let us denote
\beq
\label{eq:e}
e_s(t):=e^{ist} \mbox{ for } s \in \Z \mbox{ and } t \in \R.
\ee
If a sequence $(\vphi^s)_{s \in \Z}$ with $\vphi^s \in L^2((0,1);\C^n)$ is given, then the pointwise
products $\vphi^se_s$ belong to $L^2\left((0,1)\times(0,2\pi);\C^n\right)$. 
Hence, they belong to the complexification of 
$(W^{\ga})^*$ (by means of the complexified version of (\ref{eq:2.4})), and it makes sense to ask if the 
series
\beq
\label{eq:2.5}
\sum_{s\in \Z} \vphi^se_s
\ee
converges in the complexification of $(W^{\ga})^*$. Moreover, we have (see \cite[Lemma 10]{KR})

\begin{lemma}\label{lem:2.5}
(i) \, For any $\vphi \in (W^{\ga})^*$ there exists a sequence $(\vphi^s)_{s \in \Z}$ with
\beq
\label{eq:2.6}
\vphi^s \in L^2((0,1);\C^n), \;\vphi^{-s}= \overline{\vphi^{s}}, \; 
\sum_{s\in \Z} (1+s^2)^{-\ga}\|\vphi^s(x)\|_{L^2((0,1);\C^n)}^2 < \infty,
\ee
and the series (\ref{eq:2.5}) converges to $\vphi$ in the complexification of $(W^{\ga})^*$.
Moreover, it holds
\beq
\label{eq:2.7}
\int\limits_{0}^1 \left\langle\vphi^s(x),u(x)\right\rangle \,dx =
\left[\vphi,ue_{-s}\right]_{W^{\ga}} \mbox{ for all  } s\in\Z \mbox{ and } u \in L^2\left((0,1);\R^n\right).
\ee

(ii) \, For any sequence $(\vphi^s)_{s \in \Z}$ with (\ref{eq:2.6}) the series (\ref{eq:2.5}) 
converges in the complexification of 
$(W^{\ga})^*$ to some $\vphi \in (W^{\ga})^*$, and (\ref{eq:2.7}) is satisfied.
\end{lemma}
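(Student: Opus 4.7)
The plan is to exploit the fact that $W^\gamma$ is essentially a weighted $\ell^2$-space of Fourier coefficients (by the identification \reff{eq:2.1}) and to reduce the statement to the standard fact that the dual of a weighted $\ell^2$-space is the $\ell^2$-space with the reciprocal weight, via the Riesz representation theorem applied on the complexification.

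Let $H^\gamma$ denote the complexification of $W^\gamma$, i.e.\ the Hilbert space of all sequences $(u^s)_{s\in\Z}$ with $u^s\in L^2((0,1);\C^n)$ and $\sum_s(1+s^2)^\gamma\|u^s\|_{L^2}^2<\infty$, endowed with the weighted Hermitian inner product that induces the norm \reff{eq:1.12}; then $W^\gamma$ is the closed real subspace cut out by the reality condition $u^{-s}=\overline{u^s}$, and the complexification of $(W^\gamma)^*$ can be identified with $(H^\gamma)^*$. For part (i), given $\varphi\in(W^\gamma)^*$, I extend it $\C$-linearly to $\tilde\varphi\in(H^\gamma)^*$ and apply Riesz to obtain a unique $w\in H^\gamma$ with $\tilde\varphi(u)=(u,w)_{H^\gamma}$. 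The key observation is that for $u\in L^2((0,1);\R^n)$ the product $ue_{-s}$ has exactly one nonvanishing Fourier coefficient (equal to $u$, at frequency $-s$), so that
\[
[\varphi,ue_{-s}]_{W^\gamma}=\tilde\varphi(ue_{-s})=c_\gamma(1+s^2)^\gamma\int_0^1\langle u(x),\overline{w^{-s}(x)}\rangle\,dx
\]
for an explicit constant $c_\gamma$ arising from the normalisation in \reff{eq:1.12}. This forces the definition $\varphi^s:=c_\gamma(1+s^2)^\gamma\overline{w^{-s}}\in L^2((0,1);\C^n)$, which automatically satisfies \reff{eq:2.7}.

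The summability condition in \reff{eq:2.6} then follows from $w\in H^\gamma$ after reindexing $s\mapsto-s$: indeed $\sum_s(1+s^2)^{-\gamma}\|\varphi^s\|_{L^2}^2=c_\gamma^2\sum_s(1+s^2)^\gamma\|w^{-s}\|_{L^2}^2=c_\gamma^2\|w\|_{H^\gamma}^2$, which up to the constant equals $\|\varphi\|^2_{(W^\gamma)^*}$, so the assignment $\varphi\mapsto(\varphi^s)_{s\in\Z}$ is in fact an isomorphism onto the weighted $\ell^2_{-\gamma}$-space described by \reff{eq:2.6}. The reality relation $\varphi^{-s}=\overline{\varphi^s}$ is obtained from the fact that $\varphi$ takes real values on real functions: for real $v$, the identities $ve_{-s}+ve_s=2v\cos(st)$ and $ve_{-s}-ve_s=-2iv\sin(st)$ yield $\tilde\varphi(ve_{-s})=\overline{\tilde\varphi(ve_s)}$, which translated through \reff{eq:2.7} gives the conjugate relation on the $\varphi^s$. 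Finally, the convergence of the series \reff{eq:2.5} to $\varphi$ in the complexification of $(W^\gamma)^*$ is, under the Riesz isometry just constructed, exactly the convergence of the Fourier expansion $w=\sum_k w^k e_k$ in $H^\gamma$, which holds by definition of $H^\gamma$.

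For part (ii), the construction is inverted: starting from $(\varphi^s)_{s\in\Z}$ satisfying \reff{eq:2.6}, set $w^s:=c_\gamma^{-1}(1+s^2)^{-\gamma}\overline{\varphi^{-s}}$; then \reff{eq:2.6} gives $w\in H^\gamma$, the reality of $(\varphi^s)$ is equivalent to $w^{-s}=\overline{w^s}$ so that the functional $u\mapsto(u,w)_{H^\gamma}$ restricts to a real-valued bounded functional on $W^\gamma$, and the defining relation \reff{eq:2.7} together with convergence of \reff{eq:2.5} follow as in part (i). The main bookkeeping obstacle will be to pin down the correct constant $c_\gamma$ and to disentangle conjugation in the Hermitian pairing from the real-valuedness of $\varphi$; once done, everything else is mechanical Hilbert-space duality.
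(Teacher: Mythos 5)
The paper itself gives no proof of this lemma --- it is quoted verbatim from \cite[Lemma 10]{KR} --- so there is no internal argument to measure yours against. Your proof is correct and is essentially the canonical one: identify the complexification of $W^{\ga}$ with the weighted $\ell^2$-space $H^{\ga}$ of Fourier-coefficient sequences, apply the Riesz representation theorem there to the $\C$-linear extension $\tilde\varphi$, and read off $\varphi^s$ by testing against the functions $ue_{-s}$, which occupy a single Fourier mode. The verification of \reff{eq:2.7}, the summability in \reff{eq:2.6} (the weight $(1+s^2)^{\ga}$ carried by $\varphi^s$ cancels against the $(1+s^2)^{-\ga}$ in \reff{eq:2.6} exactly as you compute), the reality relation via $\tilde\varphi(ve_{-s})=\overline{\tilde\varphi(ve_s)}$ for real $v$, and the convergence of \reff{eq:2.5} as the image of the Fourier expansion of the Riesz representer $w$ under the (conjugate-linear, isometric) Riesz map are all sound, and part (ii) is indeed just the inverse construction. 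Two cosmetic points remain. First, the constant you call $c_\ga$ does not depend on $\ga$: it is only the normalization mismatch between \reff{eq:1.12} (where the inner integral is $2\pi$ times the coefficient $u^s$ of \reff{eq:2.1}) and the pairing \reff{eq:1.16}, \reff{eq:2.4}. Second, when you ``disentangle conjugation'' you must use the \emph{bilinear} (not sesquilinear) extension of \reff{eq:2.4} to the complexification, since elements of the complexified dual act $\C$-linearly; with that convention your displayed formula for $\tilde\varphi(ue_{-s})$ and the definition $\varphi^s:=c(1+s^2)^{\ga}\overline{w^{-s}}$ fit together as claimed, and $J(\varphi^se_s)=w^{-s}e_{-s}$, which is precisely what makes the convergence of \reff{eq:2.5} equivalent to the convergence of $\sum_k w^ke_k$ in $H^{\ga}$.
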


\section{
Isomorphism property (proof of Theorem~\ref{thm:1.0})}\label{sec:iso}
\setcounter{equation}{0}

Let $\ga \ge 1$ and $f\in W^{\ga}$ be arbitrarily fixed. We have $f(x,t)=\sum\limits_{s\in\Z}f^s(x)e^{ist}$
with 
\begin{equation}\label{eq:3.1}
f^s\in L^2\left((0,1);\C^n\right), \;\; 
\sum\limits_{s\in\Z}(1+s^2)^{\gamma}\int\limits_{0}^1 \left\|f^s(x)\right\|^2\,dx <\infty.
\end{equation}
We have to show that, if
\reff{ge}, \reff{le}, and \reff{cond} hold, 
then there exists exactly one $u\in V^{\ga}(a,r)$ with
$$
\A(a,b^0) u=f \mbox{ and } \|u\|_{V^{\gamma}(a,r)} \le C \|f\|_{W^\ga},
$$
where the constant $C$ does not depend on $\ga, a, b^0,u,$ and $f$, but only on the constant $c$, which was 
introduced in the assumptions of Theorem \ref{thm:1.0}. But
$$
\|u\|_{V^{\gamma}(a,r)}=\|u\|_{W^{\gamma}}+\|\partial_tu+a\partial_xu\|_{W^{\gamma}}=\|u\|_{W^{\gamma}}+\|f-b^0u\|_{W^{\gamma}},
$$
hence we have to show that  there exists exactly one $u\in V^{\ga}(a,r)$ with
\beq
\label{prob}
\A(a,b^0) u=f \mbox{ and } \|u\|_{W^{\gamma}} \le C \|f\|_{W^\ga}
\ee
with a constant $C$, which  does not depend on $\ga, a, b^0,u,$ and $f$, but only on~$c$. 

Writing $u$  as series according to~(\ref{eq:2.1}) and~(\ref{eq:2.2}),  
it is easy to see that \reff{prob} is satisfied if   for all $s \in \Z$ we have
$u^s\in H^1\left((0,1);\C^n\right)$ and
\begin{equation}\label{eq:3.2}
a_j(x)\frac{d}{dx}u_j^s(x)+\left(is+b_{jj}(x)\right)u_j^s(x) = f_j^s(x),
\; j=1,\ldots,n,
\end{equation}
\begin{equation}\label{eq:3.3}
\left.
\begin{array}{rccl}
\displaystyle
u_j^s(0) &=& \displaystyle\sum\limits_{k=m+1}^nr_{jk}^0u_k^s(0),& j=1,\ldots,m,\\
\displaystyle
u_j^s(1) &=& \displaystyle\sum\limits_{k=1}^mr_{jk}^1u_k^s(1),& j=m+1,\ldots,n,
\end{array}\right\}
\end{equation}
\begin{equation}\label{eq:3.5}
\sum\limits_{s\in\Z}(1+s^2)^{\gamma}\int\limits_{0}^1|u_j^s(x)|^2\,dx\le C \|f\|_{W^\ga},\;\; j=1,\ldots,n.
\end{equation}
And vice versae: If  \reff{prob} is satisfied, then we have $u^s\in H^1\left((0,1);\C^n\right)$ and (\ref{eq:3.2})--(\ref{eq:3.5}). Indeed, 
take a smooth test function $\vphi:(0,1) \to \R$ with compact support. Then we have
\begin{eqnarray*}
\lefteqn{
\int_0^1\frac{f^s_j(x)\vphi(x)}{a_j(x)} dx=}\\
&&=\frac{1}{2\pi}\int_0^1\int_0^{2\pi}\frac{\left(\partial_tu_j(x,t)+a_j(x)\partial_xu_j(x,t)+b_{jj}(x)u_j(x,t)\right)\vphi(x)e^{-ist}}
{a_j(x)} dt dx\\
&&=\int_0^1\left(-u^s_j(x)\vphi'(x) +\frac{(is+b_{jj}(x))u^s_j(x)\vphi(x)}{a_j(x)}\right) dx.
\end{eqnarray*}
This implies $u_j^s\in H^1\left((0,1);\C\right)$ and \reff{eq:3.2}.
After that it follows easily that also the boundary condtions  \reff{eq:3.3} are fulfilled.

Now we are going  to show that there exists exactly one tuple of sequences
$(u_j^s)_{s\in\Z}, j=1,\ldots,n,$  with $u_j^s\in H^1\left((0,1);\C\right)$ 
satisfying~(\ref{eq:3.2})--(\ref{eq:3.5}).

By means of the variation of constants formula,~(\ref{eq:3.2}) is fulfilled if and only if
\begin{equation}\label{eq:3.7}
u_j^s(x) = e^{-is\al_j(x)-\be_j(x)}\left(u_j^s(0)+
\int\limits_0^xe^{is\al_j(y)+\be_j(y)}\frac{f_j^s(y)}{a_j(y)}\,dy\right),
\end{equation}
where the functions $\al_j$ and $\be_j$ are defined in (\ref{coef}).
The boundary conditions~(\ref{eq:3.3}) are satisfied if and only if
\begin{equation}\label{eq:3.9}
u_j^s(0)=\sum\limits_{k=m+1}^nr_{jk}^0u_k^s(0),\;j=1,\ldots,m,
\end{equation}
and
\begin{eqnarray*}
\label{eq:3.10}
\lefteqn{
e^{-is\al_j(1)-\be_j(1)}\left(u_j^s(0)+\int\limits_0^1e^{is\al_j(y)+\be_j(y)}\frac{f_j^s(y)}{a_j(y)}\,dy
\right)}\\
&&=\sum\limits_{k=1}^mr_{jk}^1e^{-is\al_k(1)-\be_k(1)}\left(u_k^s(0)+
\int\limits_0^1e^{is\al_k(y)+\be_k(y)}\frac{f_k^s(y)}{a_k(y)}\,dy
\right),\; \\
&&j=m+1,\ldots,n.
\end{eqnarray*}
This is equivalent to~(\ref{eq:3.9}),
\begin{eqnarray}\label{eq:3.12}
\lefteqn{
e^{-is\al_j(1)-\be_j(1)}u_j^s(0)-\sum\limits_{k=1}^m\sum\limits_{p=m+1}^n
e^{-is\al_k(1)-\be_k(1)}r_{jk}^1r_{kp}^0u_p^s(0)}\nonumber \\
&&=-e^{-is\al_j(1)-\be_j(1)}\int\limits_0^1e^{is\al_j(y)+\be_j(y)}\frac{f_j^s(y)}{a_j(y)}\,dy \nonumber\\
&&+\sum\limits_{k=1}^me^{-is\al_k(1)-\be_k(1)}r_{jk}^1
\int\limits_0^1e^{is\al_k(y)+\be_k(y)}\frac{f_k^s(y)}{a_k(y)}\,dy,\; \nonumber\\
&&j=m+1,\ldots,n.
\end{eqnarray}
The system~(\ref{eq:3.12}) has a unique solution
$(u_{m+1}^s(0),\dots,u_n^s(0))$ if and only if 
its coefficient matrix $I-R_s(a,b^0,r)$ (where $R_s(a,b^0,r)$ is introduced in (\ref{R}))
is regular. If, moreover, assumptions \reff{le}--(\ref{cond}) are satisfied, then
there exist coefficients
$c_{jk}^s$ and  a constant $C$ such that
\beq
\label{rep}
u_j^s(0)=\sum\limits_{k=1}^nc_{jk}^se^{-is\al_k(1)-\be_k(1)}
\int\limits_0^1e^{is\al_k(y)+\be_k(y)}\frac{f_k^s(y)}{a_k(y)}\,dy, \, j=m+1,\ldots,n,
\ee
and
$|c_{jk}^s|\le C$ uniformly with respect to  $a$, $b^0$, $r$, and $s\in\Z$
with  \reff{le}--(\ref{cond}).
Hence, for each $s\in\Z$ the 
boundary value problem~(\ref{eq:3.2})--(\ref{eq:3.3}) is uniquely solvable, and we
have the integral representation~(\ref{eq:3.7}) of the solution, where $ u_j^s(0)$ 
for $1\le j\le m$ is given by~(\ref{eq:3.9}) and for  $m+1\le j\le n$ by~(\ref{rep}).
Putting this together, we get
\begin{eqnarray}\label{eq:3.14}
\lefteqn{
u_j^s(x)=
e^{-is\al_j(x)-\be_j(x)}\Bigg(\int\limits_0^xe^{is\al_j(y)+\be_j(y)}\frac{f_j^s(y)}{a_j(y)}\,dy} \nonumber\\
&\displaystyle+\sum\limits_{k=1}^nd_{jk}^s
e^{-is\al_k(1)-\be_k(1)}\int\limits_0^1e^{is\al_k(y)+\be_k(y)}\frac{f_k^s(y)}{a_k(y)}\,dy\Bigg),
\; j=1,\ldots,n,&
\end{eqnarray}
with certain coefficients $d_{jk}^s$ such that there exists a constant $C$ (depending neither on $f$ nor on 
 $a$, $b^0$, $r$, and $s\in\Z$ satisfying  \reff{le}--(\ref{cond}))
with
\begin{equation}\label{eq:3.15}
|d_{jk}^s|\le C.
\end{equation}
In addition,~(\ref{eq:3.14}) and~(\ref{eq:3.15}) imply that there exists a constant $C$
(depending neither on $f$ nor on 
 $a$, $b^0$, $r$, and $s\in\Z$ satisfying  \reff{le}--(\ref{cond}))
such that
\begin{equation}\label{eq:3.16}
|u_{j}^s(x)|\le C\int\limits_0^1\|f^s(x)\|\,dx.
\end{equation}
The estimate~(\ref{eq:3.5}) now follows from (\ref{eq:3.1}).

\section{Fredholmness property (proof of Theorem~\ref{thm:1.1})}\label{sec:fredh}
\setcounter{equation}{0}

In Sections~\ref{sec:fredh} and~\ref{sec:index} we suppose the data  $a$, $b$, and $r$ to be fixed and to satisfy 
(\ref{cond})--(\ref{eq:1.10}). Hence we will omit the arguments in the operators and the spaces:
$$
\A:=\A(a,b^0),\; \B:=\B(b^1), \; \tilde{\A}:=\tilde{\A}(a,b^0),\; \tilde{\B}:=\tilde{\B}(b^1),
$$
$$
V^\ga:=V^\ga(a,r),\; \tilde{V}^\ga:=\tilde{V}^\ga(a,r).
$$

In this section we prove that
$\A+\B$ is Fredholm from $V^{\ga}$ into $W^{\ga}$, which is part of the assertions of 
Theorem~\ref{thm:1.1}. 

Obviously, $\A+\B$ is Fredholm from $V^{\ga}$ into $W^{\ga}$ if and
only if $I+\B\A^{-1}$ is Fredholm from $W^{\ga}$ into $W^{\ga}$. Here
$I$ is the identity in~$W^{\ga}$.

We will prove that  $I+\B\A^{-1}$ is Fredholm from $W^{\ga}$ into $W^{\ga}$
using the following abstract criterion for Fredholmness (see, e.g., \cite[Lemma 11]{KR} and \cite[Proposition 5.7.1]{Zeidler}):

\begin{lemma}\label{lem:4.1}
Let $W$ be a Banach space, $I$ the identity in $W$, and $\CC\in\LL(W)$ such that $\CC^2$
is compact.  Then $I+\CC$ is Fredholm.
\end{lemma}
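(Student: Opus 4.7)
The plan is to reduce the claim to the classical Riesz--Schauder theorem, which says that $I-\mathcal{K}$ is a Fredholm operator of index zero whenever $\mathcal{K}\in\mathcal{L}(W)$ is compact. The starting point is the elementary algebraic identity
\[
(I+\mathcal{C})(I-\mathcal{C}) \;=\; (I-\mathcal{C})(I+\mathcal{C}) \;=\; I-\mathcal{C}^2,
\]
and since by hypothesis $\mathcal{C}^2$ is compact, the right-hand side is Fredholm. In particular its kernel is finite-dimensional and its image $\im(I-\mathcal{C}^2)$ is closed with finite codimension in $W$.

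From the identity read as $(I-\mathcal{C})(I+\mathcal{C})=I-\mathcal{C}^2$ I obtain at once $\ker(I+\mathcal{C})\subseteq \ker(I-\mathcal{C}^2)$, so $\dim\ker(I+\mathcal{C})<\infty$. From the identity read as $(I+\mathcal{C})(I-\mathcal{C})=I-\mathcal{C}^2$ I obtain $\im(I+\mathcal{C})\supseteq \im(I-\mathcal{C}^2)$, which gives that $\im(I+\mathcal{C})$ has finite codimension in $W$. These two observations form the algebraic core of the proof.

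The only point that requires real attention is the closedness of $Y:=\im(I+\mathcal{C})$. My plan is to pass to the finite-dimensional quotient $W/\im(I-\mathcal{C}^2)$ via the canonical projection $\pi$. Since $\im(I-\mathcal{C}^2)\subseteq Y$, one has $Y=\pi^{-1}(\pi(Y))$, and $\pi(Y)$ is a linear subspace of a finite-dimensional Banach space, hence automatically closed; continuity of $\pi$ then forces $Y$ to be closed in $W$. Combining this with the finite-dimensional kernel and the finite-codimensional image yields the Fredholm property of $I+\mathcal{C}$. The main obstacle I anticipate is precisely this closedness argument; the remaining manipulations are purely algebraic consequences of the factorisation and the Riesz--Schauder theorem.
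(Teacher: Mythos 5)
Your proof is correct. The paper does not supply its own argument for Lemma~\ref{lem:4.1} — it simply cites \cite[Lemma 11]{KR} and \cite[Proposition 5.7.1]{Zeidler} — so there is nothing in the text to compare against line by line. Your argument is the standard one: factor $(I-\CC)(I+\CC)=(I+\CC)(I-\CC)=I-\CC^2$, invoke Riesz--Schauder on $I-\CC^2$, deduce $\ker(I+\CC)\subseteq\ker(I-\CC^2)$ and $\im(I+\CC)\supseteq\im(I-\CC^2)$, and then obtain closedness of $\im(I+\CC)$ by pulling back the (automatically closed) subspace $\pi(\im(I+\CC))$ of the finite-dimensional quotient $W/\im(I-\CC^2)$, which works precisely because $\ker\pi=\im(I-\CC^2)\subseteq\im(I+\CC)$. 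This is essentially a self-contained re-derivation of the relevant special case of Atkinson's characterization (with parametrix $I-\CC$), and it is exactly what one finds in the cited sources. One small remark: the closedness step, which you correctly flag as the only delicate point, is sometimes quoted directly as the lemma ``a subspace containing a closed finite-codimensional subspace is closed''; your quotient argument is a clean proof of that fact and fills the gap completely.
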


In order to use Lemma~\ref{lem:4.1} with $W:=W^{\ga}$ and $\CC:= \B\A^{-1}$ we have to show that
$\left(\B\A^{-1}\right)^2$ is compact from $W^{\ga}$ into $W^{\ga}$.
For this purpose we will use 
Lemma~\ref{lem:2.1}. 

Condition $(i)$ of 
Lemma~\ref{lem:2.1} is satisfied because $\B\A^{-1}$ is a bounded operator from 
$W^{\ga}$ into $W^{\ga}$. 

It remains to check condition $(ii)$ of 
Lemma~\ref{lem:2.1}. For this purpose we will use the integral representation~(\ref{eq:3.14}) of $\A^{-1}$:

Take a bounded set $N\subset W^{\ga}$ and $f \in N$. Denote $u:=\A^{-1}f$ and $\tilde{u}:=
\left(\B\A^{-1}\right)^2f$. Then
\begin{eqnarray*}\label{tilde}
\tilde{u}_j^s(x)=\sum\limits_{k\ne j}
b_{jk}(x)e^{-is\al_k(x)-\be_k(x)}\Biggl(
\int\limits_0^xe^{is\al_k(y)+\be_k(y)}a_k^{-1}(y)\sum\limits_{l\ne k}
b_{kl}(y)u_l^s(y)\,dy\nonumber\\
+\sum\limits_{l=1}^nd_{kl}^se^{-is\al_l(1)-\be_l(1)}
\int\limits_0^1e^{is\al_l(y)+\be_l(y)}a_l^{-1}(y)\sum\limits_{r\ne l}
b_{lr}(y)u_r^s(y)\,dy
\Biggr).
\end{eqnarray*}
Therefore
$
\tilde{u}_j^s(x+\xi)e^{is\tau}-\tilde{u}_j^s(x)=P_j^s(x,\xi,\tau)+Q_j^s(x,\xi,\tau)+R_j^s(x,\xi)
$
with
\begin{eqnarray*}
P_j^s(x,\xi,\tau)&:=&
\sum\limits_{j\ne k \ne l}
\int\limits_x^{x+\xi}e^{is\left(-\al_k(x+\xi)+\tau+\al_k(y)\right)-\be_k(x+\xi)+\be_k(y)}\\[2mm]
&&\times a_k^{-1}(y)
b_{jk}(x+\xi)b_{kl}(y)u_l^s(y)\,dy,\\[2mm]
Q_j^s(x,\xi,\tau)&:=&\sum\limits_{k\ne j}
b_{jk}(x+\xi)e^{-\be_k(x+\xi)}\left(e^{is\left(-\al_k(x+\xi)+\tau\right)}-e^{-is\al_k(x)}\right)S_k^s(x),\\
R_j^s(x,\xi)&:=&\sum\limits_{k\ne j}
\left(b_{jk}(x+\xi)e^{-\be_k(x+\xi)}-b_{jk}(x)e^{-\be_k(x)}\right)S_k^s(x)
\end{eqnarray*}
and
\begin{eqnarray}
\label{S}
S_k^s(x):=
\int\limits_0^{x}e^{is\al_k(y)+\be_k(y)}a_k^{-1}(y)\sum\limits_{l\ne k}
b_{kl}(y)u_l^s(y)\,dy\nonumber\\
+ 
\sum\limits_{l=1}^nd_{kl}^se^{-is\al_l(1)-\be_l(1)}
\int\limits_0^1e^{is\al_l(y)+\be_l(y)}a_l^{-1}(y)\sum\limits_{r\ne l}
b_{lr}(y)u_r^s(y)\,dy.
\end{eqnarray}
We have to show that
\begin{eqnarray*}
\sum\limits_{s \in \Z}(1+s^2)^\ga\int_0^1\left(|P_j^s(x,\xi,\tau)|^2+|Q_j^s(x,\xi,\tau)|^2+|R_j^s(x,\xi)|^2\right)\,dx \to 0\\
\end{eqnarray*}
 for $|\xi|+|\tau| \to 0$ uniformly with respect to $f \in N$.

Because of $\A u=f$ we have (\ref{eq:3.16}). This implies 
\beq
\label{Pest}
\sum\limits_{s \in \Z}(1+s^2)^\ga\int_0^1|P_j^s(x,\xi,\tau)|^2\,dx \le
C \xi^2 \|f\|^2_{W^\ga},
\ee
where the constant $C$ does not depend on $j,\xi, \tau$, and $f$. Hence, the left hand side of 
(\ref{Pest}) tends to zero for  $|\xi|\to 0$ uniformly with respect to $f \in N$.
 
In order to estimate $Q_j^s(x,\xi,\tau)$ and $R_j^s(x,\xi)$, let us first estimate $S_j^s(x)$.
Again we use $\A u=f$. 
From~(\ref{eq:3.2}) it
follows
$$
\frac{d}{dy}\left(e^{is\al_l(y)}u_l^s(y)\right)=
e^{is\al_l(y)}\frac{f_l^s(y)-b_{ll}(y)u_l^s(y)}{a_l(y)}.
$$
Using this, we get 
\begin{eqnarray*}
\lefteqn{
is\frac{a_l(y)-a_k(y)}{a_k(y)a_l(y)}
e^{is\al_k(y)}u_l^s(y)}\nonumber\\
&&=e^{is(\al_k(y)-\al_l(y))}\frac{d}{dy}\left(e^{is\al_l(y)}u_l^s(y)\right)
-\frac{d}{dy}\left(e^{is\al_k(y)}u_l^s(y)\right)\nonumber\\
&&=e^{is\al_k(y)}\frac{f_l^s(y)-b_{ll}(y)u_l^s(y)}{a_l(y)}
-\frac{d}{dy}\left(e^{is\al_k(y)}u_l^s(y)\right).
\end{eqnarray*}
Therefore
\begin{eqnarray}
\label{eq:4.2}
\lefteqn{
e^{is\alpha_k(y)+\beta_k(y)} \frac{b_{kl}(y)}{a_k(y)} u_l^s(y)=\frac{ e^{\beta_k(y)}}{is} \frac{a_l(y)b_{kl}(y)}{a_l(y)-a_k(y)} }\nonumber\\
&&\times
\left(e^{is\alpha_k(y)} \frac{f_l^s(y)-b_{ll}(y)u_l^s(y)}{a_l(y)}-\frac{d}{dy}\left(e^{is\alpha_k(y)}u_l^s(y)\right)\right).
\end{eqnarray}
Moreover, because of assumption~(\ref{eq:1.10}), 
for all
$k\ne l$ the function
$$
y\in[0,1]\mapsto e^{\be_k(y)}\frac{a_l(y)b_{kl}(y)}{a_l(y)-a_k(y)}
$$
is in $BV(0,1)$. Hence,
\beq
\label{BVass}
\left|\int\limits_0^xe^{\be_k(y)}\frac{a_l(y)b_{kl}(y)}{a_l(y)-a_k(y)}
\frac{d}{dy}\left(e^{is\al_k(y)}u_l^s(y)\right)\,dy\right|\le 
C\|u_l^s\|_\infty,
\ee
the constant $C$ being independent of $x, k, l,s$, and $u$.  Therefore,~(\ref{eq:3.16})
and~(\ref{eq:4.2}) imply
\begin{equation}
\label{trick}
\left|\int\limits_0^xe^{is\al_k(y)+\be_k(y)}\frac{b_{kl}(y)}{a_k(y)}u_l^s(y)\,dy\right|\le \frac{C}{1+|s|}
\int\limits_0^1\|f^s(y)\|\,dy
\end{equation}
for some constant $C$ being independent of $x, k, l,s$, and $f$.
Similar estimates are true for all other integrals in~(\ref{S}).
As a consequence,
\beq\label{eq:4.3}
|S_j^s(x)|\le  \frac{C}{1+|s|}
\int\limits_0^1\|f^s(y)\|\,dy,
\ee
where $C$ does not depend on $x, j, s$, and $f$.
This gives
\begin{eqnarray*}
\lefteqn{
\int_0^1|Q_j^s(x,\xi,\tau)|^2\,dx }\\ 
&&\le
\frac{C}{1+|s|}  \max\limits_{k=1,\ldots,n}|e^{is\left(-\al_k(x+\xi)+\tau\right)}-e^{-is\al_k(x)}|^2
\int_0^1\|f^s(y)\|^2\,dy,
\end{eqnarray*}
where $C$ does not depend on $x,\xi,\tau, j, s$, and $f$.
But assumption (\ref{ge}) and notation (\ref{coef}) imply that
$$
|e^{is \left(-\al_k(x+\xi)+\tau\right)}-e^{-is\al_k(x)}|\le Cs(|\xi|+|\tau|),
$$
hence
\beq
\label{Qest}
\sum\limits_{s \in \Z}(1+s^2)^\ga\int_0^1|Q_j^s(x,\xi,\tau)|^2\,dx \le
C (\xi^2+\tau^2)\|f\|^2_{W^\ga},
\ee
where the constants, again, do not depend on $j,k,\xi,\tau$, and $f$. Hence, the left hand side of 
(\ref{Qest}) tends to zero for $|\xi|+|\tau| \to 0$   uniformly with respect to $f \in N$.

Finally, (\ref{eq:4.3}) gives
\begin{eqnarray}
\label{Rest}
\lefteqn{
\sum\limits_{s \in \Z}(1+s^2)^\ga\int_0^1|R_j^s(x,\xi)|^2\,dx}\nonumber\\ 
&&\le
C  \max\limits_{k=1,\ldots,n}\int\limits_0^1|b_{jk}(x+\xi)e^{-\be_k(x+\xi)}-b_{jk}(x)e^{-\be_k(x)}|^2\,dx
\|f\|^2_{W^\ga},
\end{eqnarray}
where the constant $C$ does not depend on $j, \xi, \tau$, and $f$. Hence, the left hand side of 
(\ref{Rest}) tends to zero for  $|\xi| \to 0$ uniformly with respect to $f \in N$
because of the continuity in the mean of the functions
$
x \mapsto b_{jk}(x)e^{-\be_k(x)}.
$ \\

\section{
Fredholm alternative (still proof of Theorem~\ref{thm:1.1})}\label{sec:index}
\setcounter{equation}{0}

To finish the proof of the assertion  $(i)$ of Theorem~\ref{thm:1.1}, 
it remains to show that the index of the operator $I+\B\A^{-1}$ is zero.
This is a straightforward consequence of  Lemma~\ref{lem:4.1} and a  homotopy argument:
Since $\left(\B\A^{-1}\right)^2\in\LL(W^\ga)$ is a compact operator,
the operators $\left(s\B\A^{-1}\right)^2\in\LL(W^\ga)$ are compact for any $s\in\R$ as well. By Lemma \ref{lem:4.1},
the operators $I+s\B\A^{-1}$ are Fredholm. Furthermore, they depend continuously on $s$. 
Since $I$ has index zero, the homotopy argument gives the same property for 
the operator $I+s\B\A^{-1}$ for any  $s\in\R$, in particular, for $s=1$.
Assertion  $(i)$ is thereby proved.

Summarizing, we proved  the Fredholm alternative for $\A+\B\in\LL(V^\ga,W^\ga)$. Hence, we have  
\beq
\label{Fr}
\left.
\begin{array}{r}
\dim \ker (\A+\B)=\dim \ker (\A+\B)^*<\infty,\\ 
\im(\A+\B)=\{f \in W^\ga: \; [\vphi,f]_{W^\ga}=0 \mbox{ for all } \vphi \in \ker(\A+\B)^* \}.
\end{array}
\right\}
\ee
Here $(\A+\B)^*$ is the dual operator  to $\A+\B$, i.e. a linear bounded 
operator from $(W^\ga)^*$ into $(V^\ga)^*$, and $[\cdot,\cdot]_{W^\ga}:\left(W^\ga\right)^* \times W^\ga \to \R$ is the dual pairing
in $W^\ga$.

To prove assertion  $(ii)$  of Theorem~\ref{thm:1.1},
we have to prove something slightly different, namely, that 
$$
\im(\A+\B)=\{f \in W^\ga: \; \langle f,u \rangle_{L^2}=0 \mbox{ for all } u \in \ker(\tilde{\A}+\tilde{\B}) \}
$$
and that $\ker(\A+\B)$ and $\ker(\tilde{\A}+\tilde{\B})$ do not depend on $\ga$.
Here $\langle \cdot,\cdot\rangle_{L^2}$ is the scalar product in $W^{0}=L^2\left((0,1)\times
\left(0,2\pi\right);\R^n\right)$ introduced in (\ref{eq:1.16}).

Directly from the definitions of the operators $\A$, $\tilde{\A}$, $\B$, and $\tilde{\B}$ it follows
\begin{eqnarray}
\label{eq:5.1}
\langle (\A+\B)u,\tilde{u} \rangle_{L^2} 
=\langle u,(\tilde{\A}+\tilde{\B})\tilde{u} \rangle_{L^2} 
\mbox{ for all } u \in V^\ga \mbox{ and } \tilde{u} \in \tilde{V}^\ga.
\end{eqnarray}
Using the continuous dense embedding (cf. \reff{eq:2.4})
$
\tilde{V}^\ga \hookrightarrow W^\ga  \hookrightarrow W^{0}  
\hookrightarrow \left(W^\ga\right)^*,
$
it makes sense to compare the subspaces $\ker (\A+\B)^*$ of $\left(W^\ga\right)^*$ 
and $\ker (\tilde{\A}+\tilde{\B})$ of $\tilde{V}^\ga$:

\begin{lemma}\label{lem:adj}
$\ker (\A+\B)^*=\ker (\tilde{\A}+\tilde{\B}).$
\end{lemma}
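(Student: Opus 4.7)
The inclusion $\ker(\tilde\A+\tilde\B)\subseteq\ker(\A+\B)^*$ is an immediate consequence of the adjointness identity (\ref{eq:5.1}) and the dense continuous embedding $\tilde V^\ga\hookrightarrow W^0\hookrightarrow(W^\ga)^*$ provided by (\ref{eq:2.4}): for $\tilde u\in\ker(\tilde\A+\tilde\B)$, regarded as an element of $(W^\ga)^*$, and every $u\in V^\ga$ one has
\[[\tilde u,(\A+\B)u]_{W^\ga}=\langle\tilde u,(\A+\B)u\rangle_{L^2}=\langle(\tilde\A+\tilde\B)\tilde u,u\rangle_{L^2}=0,\]
so $(\A+\B)^*\tilde u=0$.

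For the reverse inclusion I take $\vphi\in\ker(\A+\B)^*\subset(W^\ga)^*$ and use Lemma \ref{lem:2.5} to expand $\vphi=\sum_{s\in\Z}\vphi^s e_s$ with $\vphi^s\in L^2((0,1);\C^n)$. The plan is to read off the adjoint BVP at each Fourier frequency by testing $[\vphi,(\A+\B)u]_{W^\ga}=0$ against well-chosen $u\in V^\ga$. Taking first $u=\psi e_{-s}$ for $\psi\in C_c^\infty((0,1);\C^n)$ (so that (\ref{eq:1.3}) holds trivially and $u\in V^\ga$), applying (\ref{eq:2.7}) and integrating by parts in $x$ transforms the identity into
\[\int_0^1\langle-is\vphi^s-\frac{d}{dx}(a\vphi^s)+b^T\vphi^s,\,\psi\rangle\,dx=0,\]
yielding the adjoint ODE $-is\vphi^s-(a\vphi^s)'+b^T\vphi^s=0$ distributionally; since $a\in C^{0,1}([0,1];\M_n)$ and $|a_j|\ge c$, this upgrades $\vphi^s$ to $H^1((0,1);\C^n)$. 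Taking next $u=\psi e_{-s}$ for $\psi\in H^1((0,1);\C^n)$ satisfying the Fourier analogue (\ref{eq:3.3}) of (\ref{eq:1.3}), an integration by parts that now exploits the ODE just established leaves only the boundary contribution
\[\langle a(1)\vphi^s(1),\psi(1)\rangle-\langle a(0)\vphi^s(0),\psi(0)\rangle=0;\]
substituting the constraints on $\psi$ and varying the free boundary components recovers precisely the Fourier transcription of (\ref{eq:1.6}) for $\vphi^s$.

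To conclude $\vphi\in\tilde V^\ga$, I note that the adjoint BVP decouples across $s$, so the map $\vphi\mapsto(\vphi^s)_{s\in\Z}$ identifies $\ker(\A+\B)^*$ with a subspace of $\prod_{s\in\Z}K_s$, where $K_s$ denotes the kernel of the $s$-th homogeneous adjoint BVP derived above. But $\dim\ker(\A+\B)^*<\infty$ by (\ref{Fr}), so only finitely many $K_s$ can contribute, forcing $\vphi$ to be a finite trigonometric polynomial in $t$ with $H^1$ coefficients in $x$. Consequently $\vphi\in W^\ga$ for every $\ga\ge 1$, $\partial_x\vphi\in W^0$, and $-\partial_t\vphi-\partial_x(a\vphi)=-b^T\vphi\in W^\ga$, placing $\vphi\in\tilde V^\ga$ with $(\tilde\A+\tilde\B)\vphi=0$. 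The main obstacle is exactly this last promotion from a priori $(W^\ga)^*$-regularity to $\tilde V^\ga$-membership; it works only because the Fredholmness already established in Sections \ref{sec:fredh}--\ref{sec:index} bounds the number of active Fourier modes, reducing regularity to a finite-dimensional triviality.
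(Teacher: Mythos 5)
Your proof follows the paper's argument essentially step for step: the easy inclusion via (\ref{eq:5.1}) and the embedding (\ref{eq:2.4}); the Fourier expansion of $\vphi$ via Lemma \ref{lem:2.5}; the derivation of the adjoint ODE and boundary conditions for each $\vphi^s$ (you merely spell out the paper's ``standard argument'' with the two classes of test functions, which is fine and uses the hypothesis $a\in C^{0,1}$ exactly where the paper does); and finite-dimensionality of $\ker(\A+\B)^*$ to truncate the series.

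One step, however, is justified too loosely. From ``$\ker(\A+\B)^*$ is identified with a finite-dimensional subspace of $\prod_{s}K_s$'' it does \emph{not} follow that ``only finitely many $K_s$ can contribute'': a finite-dimensional subspace of an infinite product may well contain elements with infinitely many nonzero components (consider the span of a single vector all of whose components are nonzero). The correct deduction, which is the paper's and for which you already have all the ingredients, is modewise: each individual function $\vphi^s e_s$ solves the $s$-th adjoint BVP, hence lies in $\ker(\tilde\A+\tilde\B)$ and therefore, by your first inclusion, in $\ker(\A+\B)^*$; the nonzero ones among them are linearly independent, so $\dim\ker(\A+\B)^*<\infty$ forces $\vphi^s=0$ for all $|s|>s_0$. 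With that one-sentence repair your argument is complete and coincides with the paper's.
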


\begin{proof}
Because of (\ref{eq:2.4}) and (\ref{eq:5.1}), we have for all 
$u \in V^\ga$ and $\tilde{u} \in \tilde{V}^\ga$ that
$$
\langle (\tilde{\A}+\tilde{\B})\tilde{u},u\rangle_{L^2} =
\langle \tilde{u},(\A+\B)u\rangle_{L^2} =\left[\tilde{u},(\A+\B)u\right]_{W^\ga}=
\left[(\A+\B)^*\tilde{u},u\right]_{W^\ga}.
$$
This implies
$
\ker(\tilde{\A}+\tilde{\B})\subseteq \ker (\A+\B)^*.
$

Now, take an arbitrary $\vphi \in \ker(\A+\B)^*$ and  show that
$\vphi \in \ker(\tilde{\A}+\tilde{\B})$.
By  Lemma \ref{lem:2.5}, we have (using notation (\ref{eq:e}))
$
\vphi=\sum\limits_{s \in \Z} \vphi^se_s
$
with
$$
\vphi^s \in L^2\left((0,1);\C^n\right), \; \sum\limits_{s \in \Z}(1+s^2)^{-\ga}
\left\|\vphi^s(x)\right\|_{L^2((0,1);\C^n)}^2 < \infty.
$$ 
It follows  that for all $u \in V^\ga$
\begin{eqnarray*}
\lefteqn{
0=\left[(\A+\B)^*\vphi,u\right]_{W^\ga}=
\left[\vphi,(\A+\B)u\right]_{W^\ga}}\\&&=
\sum\limits_{s \in \Z} \int\limits_0^1 \left\langle\vphi^s,
a(x)\frac{d}{dx}u^{-s}-isu^{-s}+b(x)^Tu^{-s}\right\rangle\,dx.
\end{eqnarray*}
Therefore
$$
 \int\limits_0^1\left\langle\vphi^s,
a(x)\frac{d}{dx}u^{-s}-isu^{-s}+b(x)^Tu^{-s}\right\rangle\,dx=0
$$
for all $u^s \in H^1\left((0,1);\C^n\right)$ with (\ref{eq:3.3}). Since $a \in C^{0,1}\left([0,1];M_n\right)$,
by a standard argument, 
we conclude that
$\vphi^s \in H^1\left((0,1);\C^n\right)$ and that it satisfies the differential equation
\beq
\label{eq:adjGl}
-a(x)\frac{d}{dx}\vphi^{s}+\left(-is+b(x)^T - \frac{d}{dx}a(x)\right)\vphi^{s}=0
\ee
and the boundary conditions
\beq
\label{eq:adjBC}
\left.
\begin{array}{l}
\displaystyle
a_j(0)\vphi_j^s(0) = -\sum\limits_{k=1}^mr_{kj}^0a_k(0)\vphi_k^s(0),\qquad m+1\le j\le n,\\
\displaystyle
a_j(1)\vphi_j^s(1) = -\sum\limits_{k=m+1}^nr_{kj}^1a_k(1)\vphi_k^s(1),\qquad 1\le j\le m.
\end{array}
\right\}
\end{equation}
In other words: The functions $\vphi^{s}(x)e^{ist}$ 
belong to $\ker(\tilde{\A}+\tilde{\B})$ and, hence, to $\ker(\A+\B)^*$. 
But they are linearly independent, and $\dim \ker(\A+\B)^*<\infty$,
hence there is $s_0 \in \N$ such that $\vphi^{s}=0$ for  $|s|>s_0$. Therefore,  $\vphi \in \tilde{V}^\ga$ for all $\ga \ge 1$ and
$(\tilde{\A}+\tilde{\B})\vphi=0$ as desired. 
\end{proof}
As it follows from the proof of Lemma~\ref{lem:adj},
$$
\ker(\tilde{\A}+\tilde{\B})=\left\{\sum\limits_{|s|\le s_0} \vphi^se_s\,\bigg|\,\vphi^s 
\mbox{ solves (\ref{eq:adjGl}), (\ref{eq:adjBC})}\right\}
$$
does not depend on $\ga$. By \reff{Fr}, $\ker(\A+\B)$ does not depend on $\ga$ as well.
Claim (ii)  of Theorem \ref{thm:1.1} follows.

\section{$\Con^k$-smoothness of the data-to-solution map 
(proof of Theorem~\ref{thm:1.2})}\label{sec:stability}
\setcounter{equation}{0}
In this section we prove Theorem \ref{thm:1.2}. Hence, we suppose the assumptions of  Theorem~\ref{thm:1.2}
to be satisfied, i.e. the data $a,b$, and $r$, which satisfy  
(\ref{dim})--(\ref{condunif}), are given and fixed. 

Recall that in  Theorem \ref{thm:1.2} the sets $A_\eps(a)$ and $B_\eps(b)$ are the open balls around $a$ and $b$ of radius $\eps$ in the (not complete) normed vector spaces
$$
\Aa:=\{\tilde{a}=\mbox{diag}(\tilde{a}_1,\ldots,\tilde{a}_n) \in BV((0,1);\M_n)\} \mbox{ with } \|a\|_{\Aa}:=\max_{1 \le j \le n}\|\tilde{a}_j\|_\infty
$$
and 
\begin{equation}
\begin{array}{ll}
\Bb:=\{\tilde{b} \in  L^\infty((0,1);\M_n): \, \tilde{b}_{jk} \in BV(0,1) \mbox{ for all }1 \le j \not= k \le n\}\nonumber\\
 \mbox{ with } \|b\|_{\Bb}:=\max_{1 \le j,k \le n}\|\tilde{b}_{jk}\|_\infty,\nonumber
\end{array}
\end{equation}
respectively.

Because the assumptions of  Theorem \ref{thm:1.2}
are satisfied, there exists $\eps>0$ such that for all  $\tilde{a} \in \Aa$ and 
$\tilde{b} \in  \Bb$ with 
\beq
\label{kl}
\|\tilde{a}-a\|_{\Aa}+\|\tilde{b}-b\|_{\Bb}<\eps
\ee
the assumptions of  Theorem \ref{thm:1.1} are satisfied.
Therefore, for those $\tilde{a}$ and  $\tilde{b}$ the operators $\A(\tilde{a}, \tilde{b}^0)+\B(\tilde{b}^1)$
are Fredholm of index zero from $V^\ga(\tilde{a},r)$ into $W^\ga$ for all $\ga \ge 2$.

Moreover, all  $\tilde{a}=\mbox{diag}(\tilde{a}_1,\ldots,\tilde{a}_n)$ and  $\tilde{b} \in  L^\infty((0,1);\M_n)$ with \reff{kl} 
fulfill the assumptions of Theorem \ref{thm:1.0}. Hence, for those $\tilde{a}$ and $\tilde{b}$
and for all $\ga \ge 1$ the operator $\A(\tilde{a},\tilde{b}^0)$ is an isomorphism from
$V^{\ga}(\tilde{a},r)$ onto $W^{\ga}$  and
\begin{equation}
\label{const}
\|\A(\tilde{a},\tilde{b}^0)^{-1}\|_{{\cal L}(W^{\ga};V^{\ga}(\tilde{a},r))} \le C,
\end{equation}
where the constant $C>0$ does not depend on $\tilde{a}, \tilde{b}$, and $\ga$, but only on $\eps$.

For the sake of shortness, write $\CC:=C\left([0,1]\times[0,2\pi];\R^n\right)$.

\begin{lemma}\label{continuous}
{For all $\ga \ge 2$ the map $(\tilde{a},\tilde{b}^0) 
\mapsto  \A(\tilde{\om},\tilde{a},\tilde{b}^0)^{-1}$ is locally Lipschitz continuous 
as a map from a subset of $L^\infty((0,1);\M_n) \times L^\infty((0,1);\M_n)$ 
into $\LL(W^{\ga};W^{\ga-1}\cap \CC)$.}
\end{lemma}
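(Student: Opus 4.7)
The plan is to exploit the explicit integral representation \reff{eq:3.14} of $\A(\tilde a,\tilde b^0)^{-1}$ established in the proof of Theorem~\ref{thm:1.0}, and to compare Fourier modes for two parameter choices directly. Write $u^i:=\A(\tilde a_i,\tilde b_i^0)^{-1}f$ for $i=1,2$. For each $s\in\Z$ the mode $u^{i,s}$ is given by \reff{eq:3.14} in terms of the corresponding quantities $\al_j^i,\be_j^i$ defined by \reff{coef} and the coefficients $d_{jk}^{i,s}$ arising from $(I-R_s(\tilde a_i,\tilde b_i^0,r))^{-1}$. The assumption \reff{condunif} together with Remark~\ref{rem:sufficient1} implies that the inverses $(I-R_s)^{-1}$ stay uniformly bounded in $s$ and in the data, provided $\eps$ is small enough, so the bound \reff{eq:3.15} holds uniformly on the small ball.

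First I would record the elementary Lipschitz estimates, valid for data in a small $L^\infty$-ball:
\[
\|\al_j^1-\al_j^2\|_\infty+\|\be_j^1-\be_j^2\|_\infty\le C\bigl(\|\tilde a_1-\tilde a_2\|_\infty+\|\tilde b_1^0-\tilde b_2^0\|_\infty\bigr),
\]
\[
\bigl|e^{is\al_j^1(x)+\be_j^1(x)}-e^{is\al_j^2(x)+\be_j^2(x)}\bigr|\le C(1+|s|)\bigl(\|\tilde a_1-\tilde a_2\|_\infty+\|\tilde b_1^0-\tilde b_2^0\|_\infty\bigr),
\]
and similarly for $1/\tilde a_j$. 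Writing $R_s^i:=R_s(\tilde a_i,\tilde b_i^0,r)$, the same exponential bound and the identity $X^{-1}-Y^{-1}=X^{-1}(Y-X)Y^{-1}$ (applied to $X=I-R_s^1$, $Y=I-R_s^2$) give
\[
|d_{jk}^{1,s}-d_{jk}^{2,s}|\le C(1+|s|)\bigl(\|\tilde a_1-\tilde a_2\|_\infty+\|\tilde b_1^0-\tilde b_2^0\|_\infty\bigr).
\]
Substituting all these differences termwise into \reff{eq:3.14} yields, after a routine computation, the pointwise estimate
\[
|u_j^{1,s}(x)-u_j^{2,s}(x)|\le C(1+|s|)\bigl(\|\tilde a_1-\tilde a_2\|_\infty+\|\tilde b_1^0-\tilde b_2^0\|_\infty\bigr)\int_0^1\|f^s(y)\|\,dy.
\]

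To pass to the norm $W^{\ga-1}$, I would square this inequality, apply Cauchy--Schwarz to the $y$-integral, multiply by $(1+s^2)^{\ga-1}$, and sum over $s$; the factor $(1+|s|)^2$ is absorbed by the loss of one power of $(1+s^2)$, and the right-hand side becomes $C(\|\tilde a_1-\tilde a_2\|_\infty+\|\tilde b_1^0-\tilde b_2^0\|_\infty)^2\|f\|_{W^\ga}^2$. For the $\CC$-estimate, I would sum the pointwise bound in $s$ and apply Cauchy--Schwarz in $s$:
\[
\sup_{x,t}|u^1(x,t)-u^2(x,t)|\le\sum_{s\in\Z}\sup_x\|u^{1,s}(x)-u^{2,s}(x)\|\le C\|f\|_{W^\ga}\bigl(\|\tilde a_1-\tilde a_2\|_\infty+\|\tilde b_1^0-\tilde b_2^0\|_\infty\bigr)\Bigl(\sum_{s\in\Z}\tfrac{(1+|s|)^2}{(1+s^2)^\ga}\Bigr)^{1/2},
\]
and the last series converges because $\ga\ge 2>3/2$. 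Continuity of $u^i$ in $(x,t)$ follows from the integral representation and the uniform convergence of the Fourier series, so $u^1-u^2\in\CC$ with the claimed bound.

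The main obstacle is the factor $|s|$ appearing each time a parameter is varied inside an exponent $e^{is\al_j}$ or inside $R_s$: this is what forces the codomain to be $W^{\ga-1}$ rather than $W^\ga$, and hence produces the ``loss of one derivative'' in the lemma. The bookkeeping for the $d_{jk}^s$ differences via the resolvent identity has to use the uniform lower bound on $|\det(I-R_s)|$ supplied by \reff{condunif} (and Remark~\ref{rem:sufficient1}), which is precisely why $\eps$ must be chosen small in advance.
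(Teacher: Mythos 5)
Your proof is correct, but it proceeds along a genuinely different route from the paper's. The paper argues at the abstract operator level: with $u'=\A(a',b')^{-1}f$ and $u''=\A(a'',b'')^{-1}f$ it writes the first-order identity
\[
\A(a'',b'')(u''-u')=(\A(a',b')-\A(a'',b''))u'=(a'-a'')\partial_x u'+(b'-b'')u',
\]
observes that the right-hand side lies in $W^{\gamma-1}$ (using the a priori bound $\|\partial_x u'\|_{W^{\gamma-1}}\le C\|f\|_{W^\gamma}$, which is extracted from $\|\partial_t u'\|_{W^{\gamma-1}}\le\|u'\|_{W^\gamma}\le C\|f\|_{W^\gamma}$ and the equation), and then applies the uniform isomorphism bound of Theorem~\ref{thm:1.0} at level $\gamma-1$. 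By contrast, you go back to the explicit Fourier-mode formula~\reff{eq:3.14} and track Lipschitz dependence of $\alpha_j,\beta_j$, of the exponentials $e^{is\alpha_j}$, and of the coefficients $d_{jk}^s$ (via the resolvent identity for $(I-R_s)^{-1}$), each contributing a factor $(1+|s|)$, which you then absorb into the loss of one power of $(1+s^2)$. Both approaches ultimately lean on the same uniform bound on $(I-R_s)^{-1}$ supplied by~\reff{condunif}; your version is more computational but makes the mechanism behind the loss of one derivative completely transparent (the derivative of $e^{is\alpha_j}$ with respect to $a$ produces $s$), whereas the paper's version is shorter and makes the phenomenon appear as a simple consequence of the perturbation identity and the fact that $\partial_x u'$ only has $W^{\gamma-1}$ regularity. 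One small advantage of your argument: your sup-norm estimate via Cauchy--Schwarz in $s$, using $\sum_s(1+|s|)^2/(1+s^2)^\gamma<\infty$ for $\gamma>3/2$, shows directly that the $\CC$-component of the conclusion holds for all $\gamma\ge 2$, whereas the paper's appeal to~\reff{const} and Lemma~\ref{lem:2.2}(iii) at level $\gamma-1$ would, read literally, want $\gamma-1>3/2$; the paper implicitly relies on the stronger pointwise bound~\reff{eq:3.16} (valid down to order $>1/2$) for the $\CC$-estimate, and your computation makes that explicit.
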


\begin{proof}
Take  $a^\prime,a^{\prime\prime}\approx a$ and $b^\prime,b^{\prime\prime}\approx b^0$ 
in $L^\infty((0,1);\M_n)$,  $\ga \ge 2$,
$f \in W^{\ga}$, $u^{\prime} \in V^{\ga}(a^\prime,r)$, and  $u^{\prime\prime} 
\in V^{\ga}(a^{\prime\prime},r)$ such that
$$
\A(a^\prime,b^\prime)u^\prime=\A(a^{\prime\prime},b^{\prime\prime})u^{\prime\prime}=f.
$$ 
Then \reff{const} and Lemma \ref{lem:2.2}(iii) yield that there exists a constant $c>0$ such that
\begin{equation}
\label{const1}
c\|\partial_tu^\prime\|_{W^{\ga-1}}
\le c\|u^\prime\|_{W^{\ga}\cap \CC} \le 
\|u^\prime\|_{V^{\ga}(a^\prime,r)} \le C\|f\|_{W^\ga}
\end{equation}
and
\begin{eqnarray}
\label{const2}
&&c\min_{1\le j\le n} \ess\inf|a_{j}^\prime(x)|\; \|\partial_x u^\prime\|_{W^{\ga-1}} 
\le c\left\|a^\prime\partial_x u^\prime\right\|_{W^{\ga-1}}\nonumber \\
&&\le c\left\|\partial_t u^\prime +a^\prime\partial_x u^\prime\right\|_{W^{\ga-1}}
+c\|\partial_t u^\prime\|_{W^{\ga-1}}\nonumber \\
&&\le c\|u^\prime\|_{V^{\ga}(a^\prime,r)}+C\|f\|_{W^\ga}\le C(c+1)\|f\|_{W^\ga},
\end{eqnarray}
where the constant $C>0$ is the same as in \reff{const} 
and is independent of $\ga,a^\prime,a^{\prime\prime},b^\prime$, and $b^{\prime\prime}$.
Moreover, we have
\begin{eqnarray*}
&&\A(a^{\prime\prime},b^{\prime\prime})(u^{\prime\prime}-u^\prime)=\left(\A(a^{\prime\prime},b^{\prime\prime})-
\A(a^{\prime\prime},b^{\prime\prime})\right)u^\prime\\
&&+\left(\A(a^\prime,b^{\prime\prime})
-\A(a^{\prime\prime},b^{\prime\prime})\right)u^\prime+\left(\A(a^\prime,b^\prime)-
\A(a^\prime,b^{\prime\prime})\right)u^\prime\\
&&=\left(a^\prime-a^{\prime\prime}\right)
\partial_xu^\prime+\left(b^\prime-b^{\prime\prime}\right)u^\prime.
\end{eqnarray*}
This is a well-defined equation in $W^{\ga-1}$, and  \reff{const}, \reff{const1}, and \reff{const2} yield
\begin{eqnarray*}
\lefteqn{
\|u^{\prime\prime}-u^\prime\|_{W^{\ga-1}\cap \CC}}\\&& 
\le C\left(\max_{1 \le j \le n}\|a^{\prime\prime}_j-a^\prime_j\|_\infty+
\max_{1 \le j \le n}\|b^{\prime\prime}_{jj}-b^\prime_{jj}\|_\infty\right)\|f\|_{W^\ga},
\end{eqnarray*}
with a new constant $C$ being independent of $\ga,a^\prime,a^{\prime\prime},b^\prime,b^{\prime\prime}$, again. 
\end{proof}

\begin{lemma}\label{isomorph}
{There exists $\eps > 0$ with the following property:

For each  $\ga\ge 2$ there exists $C>0$ such that for all 
$\tilde{a} \in A_{\varepsilon}(a)$, and
$\tilde{b} \in B_{\varepsilon}(b)$ 
the operator $\A(\tilde{a},\tilde{b}^0)+\B(\tilde{b}^1)$ is an isomorphism from
$V^{\ga}(\tilde{a},r)$ onto $W^{\ga}$
and 
\begin{equation}
\label{const3}
\left\|\left(\A(\tilde{a},\tilde{b}^0)
+\B(\tilde{b}^1)\right)^{-1}\right\|_{{\cal L}(W^{\ga};V^{\ga}(\tilde{a},r))} \le C.
\end{equation}
}
\end{lemma}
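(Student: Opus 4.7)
The plan is to combine the Fredholm index-zero property from Theorem \ref{thm:1.1} with a collective-compactness contradiction argument that yields invertibility and the uniform bound \reff{const3} simultaneously.

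First, I would observe that for $\eps>0$ small enough, every pair $(\tilde{a},\tilde{b})\in A_\eps(a)\times B_\eps(b)$ satisfies the hypotheses of Theorems \ref{thm:1.0} and \ref{thm:1.1}: the bounds \reff{ge}, \reff{le} and the sufficient condition \reff{suf} for \reff{cond} persist under small $L^\infty$-perturbations (see \reff{condunif} and Remark \ref{rem:sufficient1}), while \reff{eq:1.10} is preserved by \reff{ge1} and \reff{ungleich}. Thus $\A(\tilde{a},\tilde{b}^0)+\B(\tilde{b}^1)$ is Fredholm of index zero from $V^\ga(\tilde{a},r)$ into $W^\ga$, and Theorem \ref{thm:1.0} provides a uniform bound \reff{const} for $\A(\tilde{a},\tilde{b}^0)^{-1}$. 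Because the operator has index zero, invertibility reduces to triviality of the kernel, and \reff{const3} amounts to a uniform lower estimate on the unit sphere of $V^\ga(\tilde{a},r)$.

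I would establish both by contradiction. Assume there exist $\tilde{a}_n\to a$ in $\Aa$, $\tilde{b}_n\to b$ in $\Bb$, $u_n\in V^\ga(\tilde{a}_n,r)$, and $f_n\in W^\ga$ such that
\[
(\A(\tilde{a}_n,\tilde{b}_n^0)+\B(\tilde{b}_n^1))u_n=f_n,\qquad \|u_n\|_{V^\ga(\tilde{a}_n,r)}=1,\qquad \|f_n\|_{W^\ga}\to 0.
\]
Setting $S_n:=\A(\tilde{a}_n,\tilde{b}_n^0)^{-1}\B(\tilde{b}_n^1)$ and $g_n:=\A(\tilde{a}_n,\tilde{b}_n^0)^{-1}f_n$, the equation reads $(I+S_n)u_n=g_n$; iterating once gives
\[
u_n = S_n^2 u_n - S_n g_n + g_n,
\]
and $g_n\to 0$ in $W^\ga$ by \reff{const}. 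The key step, and the main technical obstacle, is to verify that $\{S_n^2\}$ is a \emph{collectively compact} family in $\LL(W^\ga)$: the estimates \reff{Pest}, \reff{Qest}, \reff{Rest} of Section \ref{sec:fredh} must be re-examined to check that their constants can be chosen uniformly in $n$, which requires uniform control of the $L^\infty$-bounds on the coefficients, of the lower bounds $|\tilde{a}_j|$ and $|\tilde{a}_j-\tilde{a}_k|$, of the integration-by-parts bound \reff{BVass}, and of the mean-continuity of $\tilde{b}_{jk}\exp(-\tilde{\be}_k)$ over $A_\eps(a)\times B_\eps(b)$.

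Granted this, we can extract a subsequence along which $u_n\to u$ strongly in $W^\ga$. Using Lemma \ref{continuous} together with the $L^\infty$-convergence of $\tilde{b}_n^1$, one then checks that $S_n u_n\to Su$ in $W^{\ga-1}$, where $S:=\A(a,b^0)^{-1}\B(b^1)$, and passing to the limit in $u_n+S_n u_n=g_n$ yields $u+Su=0$ in $W^{\ga-1}$. Since $\ga\ge 2$, Theorem \ref{thm:1.1}(i) guarantees that the kernel of $\A(a,b^0)+\B(b^1)$ is independent of $\ga$, and hypothesis \reff{dim} forces $u=0$. Then $u_n\to 0$ in $W^\ga$, and the identity $\partial_t u_n+\tilde{a}_n\partial_x u_n=f_n-\tilde{b}_n u_n$ forces $\|u_n\|_{V^\ga(\tilde{a}_n,r)}\to 0$, contradicting the normalization. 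This yields both the triviality of the kernel (hence invertibility via the Fredholm alternative) and the uniform bound \reff{const3}.
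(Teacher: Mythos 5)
Your overall strategy --- reduce to a Neumann-type identity, then run a compactness contradiction argument to obtain injectivity and the uniform bound simultaneously --- is the same as the paper's. But the route you chose through collective compactness of $\{S_n^2\}$ contains a genuine gap, and it is not merely a matter of ``re-examining'' the constants. The estimates \reff{Pest}, \reff{Qest}, and especially \reff{Rest} depend on the $BV$-seminorms of the off-diagonal $\tilde b_{jk}$ (through the integration-by-parts bound \reff{BVass}) and on the modulus of continuity in mean of $x\mapsto \tilde b_{jk}(x)e^{-\tilde\be_k(x)}$. The balls $A_\eps(a)$ and $B_\eps(b)$, however, are open only in the $L^\infty$-topology: membership in $BV$ is imposed, but the $BV$-seminorm is left uncontrolled. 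A sequence $\tilde b_n\to b$ in $L^\infty$ may have $\mathrm{Var}(\tilde b_{n,jk})\to\infty$, destroying the uniformity of \reff{BVass} and hence of \reff{Rest}. Collective compactness of $\{S_n^2\}$ along your contradiction sequence therefore fails, and the extraction of a subsequence with $u_n\to u$ strongly in $W^\ga$ does not go through.

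The paper sidesteps this by never appealing to compactness of the perturbed operators. Setting $z_k:=w_k/\|w_k\|_{W^\ga}$ with $w_k:=\A(\alpha_k,\beta_k^0)v_k$, it writes
\[
z_k=\bigl(\B(b^1)\A(a,b^0)^{-1}\bigr)^2 z_k+\Bigl[\bigl(\B(\beta_k^1)\A(\alpha_k,\beta_k^0)^{-1}\bigr)^2-\bigl(\B(b^1)\A(a,b^0)^{-1}\bigr)^2\Bigr]z_k,
\]
uses compactness of the single \emph{unperturbed} operator $(\B(b^1)\A(a,b^0)^{-1})^2$ from $W^\ga$ into $W^{\ga-1}$ to extract a $W^{\ga-1}$-convergent subsequence of the first term, and controls the bracket in $\LL(W^\ga;W^{\ga-1})$ by Lemma~\ref{continuous}. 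The Lipschitz bound of Lemma~\ref{continuous} involves only the $L^\infty$-norms of the coefficient differences, which the balls do control; the only $BV$-information needed is that of the fixed pair $(a,b)$. The price is that strong convergence is obtained in $W^{\ga-1}$ rather than $W^\ga$, which is precisely why the $\ga$-independence of the kernel (Theorem~\ref{thm:1.1}(i)) together with \reff{dim} is invoked to conclude. Your argument would need the same drop of one Sobolev index to become rigorous; as it stands, the collective-compactness step cannot be salvaged with the stated ball topology.
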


\begin{proof}
As mentioned above, there exists $\varepsilon>0$ such that for all
$\tilde{a} \in A_{\varepsilon}(a)$,
and $\tilde{b} \in B_{\varepsilon}(b)$ 
the operator $\A(\tilde{a},\tilde{b}^0)+\B(\tilde{b}^1)$ is 
Fredholm of index zero from
$V^{\ga}(\tilde{a},r)$ into $W^{\ga}$.

Let us show that $\eps$ can be chosen so small that for all
$\tilde{a} \in A_{\varepsilon}(a)$,
and $\tilde{b} \in B_{\varepsilon}(b)$ 
the operator $\A(\tilde{a},\tilde{b}^0)+\B(\tilde{b}^1)$ is injective (and, hence, bijective).

Suppose the contrary.
Then there exist sequences  $\al_k \to a$ in $L^\infty((0,1);\M_n)$,
$\be_k \to b$ in $L^\infty((0,1);\M_n)$,
and $v_k\in V^\ga(\al_k,r)$
such that
\begin{equation}
\label{Gleichung}
(\A(\al_k,\be_k^0)+\B(\be_k^1))v_k = 0 \mbox{ and }  v_k\ne 0.
\end{equation}
Set $w_k:=\A(\al_k,\be_k^0)v_k$. Then there is $k_0\in\N$ such that for all $k\ge k_0$
\begin{equation}
\label{gleich}
(I+\B(\be_k^1)\A(\al_k,\be_k^0)^{-1})w_k = 0.
\end{equation}
Hence
$$
w_k=\left(\B(b_k^1)\A(\al_k,\be_k^0)^{-1}\right)^2w_k,
$$
and, consequently,
\begin{eqnarray}
\label{w1}
\lefteqn{
z_k:=\frac{w_k}{\|w_k\|_{W^{\ga}}}=\left(\B(b^1)\A(a,b^0)^{-1}\right)^2z_k}\nonumber\\
&&+\left(\left(\B(\be_k^1)\A(\al_k,\be_k^0)^{-1}\right)^2-\left(\B(b^1)\A(a,b^0)^{-1}\right)^2\right)z_k.
\end{eqnarray}
Because the operator $\left(\B(b^1)\A(a,b^0)^{-1}\right)^2$ is compact from $W^{\ga}$ into  $W^{\ga}$, it is also compact 
from $W^{\ga}$ into  $W^{\ga-1}$. Hence
there exist $z \in  W^{\ga-1}$ and a subsequence $z_{k_l}$ such that $\left(\B(b^1)\A(a,b^0)^{-1}\right)^2z_{k_l} \to z$ in 
$W^{\ga-1}$. Therefore Lemma~\ref{continuous} and  \reff{w1} yield that $z_{k_l} \to z$ in 
$W^{\ga-1}$, and  Lemma~\ref{continuous} and  \reff{gleich} yield that
$$
(I+\B(b^1)\A(a,b^0)^{-1})z= 0 \mbox{ in } W^{\ga-1}.
$$
Hence $z$ belongs to $\ker\left(I+\B(b^1)\A(a,b^0)^{-1}\right)$. Since $\|z\|_{W^{\ga}}=1$, 
we get a contradiction to assumption \reff{dim}.

It remains to prove \reff{const3}. 
Suppose the contrary, i.e. that for any choice of $\eps$ \reff{const3} is not true.
Then there exist sequences  $\al_k \to a$ in $L^\infty((0,1);\M_n)$,
$\be_k \to b$ in $L^\infty((0,1);\M_n)$,
and $v_k\in V^\ga(\al_k,r)$
such that 
\begin{equation}
\label{Ungleichung}
(\A(\al_k,\be_k^0)+\B(\be_k^1))v_k \to 0 \mbox{ in } W^\ga \mbox{ as } k\to\infty \mbox{ and } \|v_k\|_{V^\ga(\al_k,r)}=1.
\end{equation}
Now we proceed as above, replacing \reff{Gleichung} by \reff{Ungleichung}, to get a contradiction. 
\end{proof}

Similarly to Lemma~\ref{continuous} one can prove

\begin{lemma}\label{continuous1}
{For all $\ga \ge 2$ the map 
\begin{eqnarray*}
(\tilde{a},\tilde{b}) \in  A_{\eps}(a) 
\times B_{\varepsilon}(b)
\mapsto  \left(\A(\tilde{a},\tilde{b}^0)+\B(\tilde{b}^1)\right)^{-1}
\in\LL(W^{\ga};W^{\ga-1}\cap \CC)
\end{eqnarray*}
is locally Lipschitz continuous.}
\end{lemma}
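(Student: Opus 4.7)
The plan is to parallel the proof of Lemma~\ref{continuous}, replacing $\A(a,b^0)$ by the full operator $\A(a,b^0)+\B(b^1)$ everywhere and using the uniform isomorphism estimate of Lemma~\ref{isomorph} in place of that of Theorem~\ref{thm:1.0}. The guiding identity is the standard resolvent one.

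First I would fix $(a',b'),(a'',b'') \in A_\eps(a)\times B_\eps(b)$ and $f \in W^\ga$ with $\ga \ge 2$, and set
$$u' := \bigl(\A(a',b'^0)+\B(b'^1)\bigr)^{-1}f, \qquad u'' := \bigl(\A(a'',b''^0)+\B(b''^1)\bigr)^{-1}f.$$
Lemma~\ref{isomorph} then supplies the uniform bound $\|u'\|_{V^\ga(a',r)} \le C\|f\|_{W^\ga}$, which together with the definition of the $V^\ga$-norm and the uniform lower bound on $\ess\inf|a'_j|$ produces uniform control of $\|u'\|_{W^\ga \cap \CC}$, $\|\partial_t u'\|_{W^{\ga-1}}$ and $\|\partial_x u'\|_{W^{\ga-1}}$, exactly as in the bounds (5.2)--(5.3) inside the proof of Lemma~\ref{continuous}.

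Next I would subtract the two defining equations, noting that $\B(b'^1)-\B(b''^1) = (b'^1 - b''^1)\cdot$, to obtain the $W^{\ga-1}$-identity
$$\bigl(\A(a'',b''^0)+\B(b''^1)\bigr)(u''-u') = (a'-a'')\,\partial_x u' + (b'-b'')\,u',$$
whose right-hand side has $W^{\ga-1}$-norm bounded by $C\bigl(\|a''-a'\|_\infty+\|b''-b'\|_\infty\bigr)\|f\|_{W^\ga}$. Inverting the left-hand operator as a uniformly bounded map from $W^{\ga-1}$ into $V^{\ga-1}(a'',r) \hookrightarrow W^{\ga-1}\cap \CC$ would then deliver the desired local Lipschitz estimate.

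The hard part will be securing the uniform operator-norm bound for $\bigl(\A(\tilde a,\tilde b^0)+\B(\tilde b^1)\bigr)^{-1}$ at the lower regularity level $\ga-1$, since Lemma~\ref{isomorph} as stated only covers level $\ga$. Injectivity at every level $\ga \ge 1$ is free: assumption~(\ref{dim}) together with Theorem~\ref{thm:1.1}(i) makes the kernel trivial and independent of $\ga$, so the operator is indeed an isomorphism from $V^{\ga-1}(\tilde a,r)$ onto $W^{\ga-1}$. The uniform bound itself would be recovered by rerunning the compactness-plus-contradiction argument from the proof of Lemma~\ref{isomorph} one level lower, invoking Lemma~\ref{continuous} at the corresponding level as the continuity input; the only further subtlety is the embedding step into $\CC$ at low $\ga$, which one handles by observing that the $\CC$-bound on $u'' - u'$ is already inherited from the $\CC$-bounds on $u'$ and $u''$ separately, combined with the representation~\reff{eq:3.14} applied to the difference to give the quantitative rate.
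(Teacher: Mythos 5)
Your proposal follows exactly the route the paper intends: the paper's entire proof of this lemma is the single sentence ``Similarly to Lemma~\ref{continuous} one can prove'', i.e.\ one repeats the subtraction argument of Lemma~\ref{continuous} with $\A(\tilde a,\tilde b^0)$ replaced by $\A(\tilde a,\tilde b^0)+\B(\tilde b^1)$ and with the uniform bound \reff{const3} of Lemma~\ref{isomorph} in place of \reff{const}. The one subtlety you flag --- that inverting the perturbed operator on the difference equation requires a uniform bound at regularity level $\ga-1$, which Lemma~\ref{isomorph} only states for levels $\ge 2$, whereas \reff{const} in the proof of Lemma~\ref{continuous} was available for all levels $\ge 1$ with a level-independent constant --- is genuine and is passed over in silence by the paper; your repair (injectivity at all levels via the $\ga$-independence of the kernel from Theorem~\ref{thm:1.1}(i), then rerunning the compactness--contradiction argument one level down) is the natural one, modulo the borderline case $\ga=2$ that the paper itself leaves untreated.
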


Let us introduce the data-to-solution map
\begin{eqnarray}
\label{map1}
\lefteqn{
\left(\tilde{a},\tilde{b},f\right) \in
A_{\eps}(a) \times B_{\varepsilon}(b) \times W^{\ga}}\nonumber\\
&&\mapsto \hat{u}\left(\tilde{a},\tilde{b},f\right):=  
\left(\A(\tilde{a},\tilde{b}^0)+\B(\tilde{b}^1)\right)^{-1}f \in  V^{\ga}(\tilde{a},r).
\end{eqnarray}

\begin{lemma}\label{c1}
The map $\hat{u}$ is $C^1$-smooth as a map into $W^{\ga-2}\cap \CC$ for all $\ga\ge 3$.
\end{lemma}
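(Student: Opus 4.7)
Formally differentiating the defining identity $\bigl(\A(\tilde{a},\tilde{b}^0)+\B(\tilde{b}^1)\bigr)\hat{u}=f$ yields the candidate derivative
\[
D\hat{u}_{(\tilde{a},\tilde{b},f)}[\de a,\de b,\de f]=\bigl(\A(\tilde{a},\tilde{b}^0)+\B(\tilde{b}^1)\bigr)^{-1}\bigl[\de f-(\de a)\,\d_x\hat{u}-(\de b)\,\hat{u}\bigr].
\]
My plan is to prove in order: (i) this formula defines a bounded linear map $\Aa\times\Bb\times W^{\ga}\to W^{\ga-2}\cap\CC$; (ii) it is the Fréchet derivative of $\hat{u}$; and (iii) it depends continuously on the base point $(\tilde{a},\tilde{b},f)$. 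The principal obstacle is that the target space $V^{\ga}(\tilde{a},r)$ in which $\hat{u}$ naturally lives depends on $\tilde{a}$, which rules out a direct appeal to the Implicit Function Theorem as in Corollary~\ref{corr}; I bypass this by working throughout in the coefficient-independent scale $W^{\ga}$, at the price of losing one derivative per Lipschitz step (cf.\ Lemmas~\ref{continuous}--\ref{continuous1}).

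\textbf{Boundedness and Fréchet differentiability.} Lemma~\ref{isomorph} gives $\|\hat{u}\|_{V^{\ga}(\tilde{a},r)}\le C\|f\|_{W^{\ga}}$, and then $\d_x\hat{u}\in W^{\ga-1}$ follows from $\d_t\hat{u}+\tilde{a}\,\d_x\hat{u}\in W^{\ga}$ together with $\tilde{a}^{-1}\in L^\infty$. Hence the bracket $\de f-(\de a)\,\d_x\hat{u}-(\de b)\,\hat{u}$ lies in $W^{\ga-1}$, and applying $(\A+\B)^{-1}$ via Lemma~\ref{isomorph} at level $\ga-1\ge 2$ produces an element of $V^{\ga-1}(\tilde{a},r)$; this embeds into $W^{\ga-1}\cap\CC\hookrightarrow W^{\ga-2}\cap\CC$ by Lemma~\ref{lem:2.2}(iii), since $\ga-1>3/2$. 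For differentiability, set $\hat{u}':=\hat{u}(\tilde{a}+\de a,\tilde{b}+\de b,f+\de f)$; since the reflection conditions in \reff{eq:1.3} do not involve $\tilde{a}$, and $\d_x\hat{u}'\in W^{\ga-1}$ implies $\d_t\hat{u}'+\tilde{a}\,\d_x\hat{u}'=(\d_t\hat{u}'+(\tilde{a}+\de a)\,\d_x\hat{u}')-(\de a)\,\d_x\hat{u}'\in W^{\ga-1}$, one has $\hat{u}'\in V^{\ga-1}(\tilde{a},r)$ as well. Subtracting the two defining equations yields
\[
\hat{u}'-\hat{u}=(\A+\B)^{-1}\bigl[\de f-(\de a)\,\d_x\hat{u}'-(\de b)\,\hat{u}'\bigr]
\]
in $V^{\ga-1}(\tilde{a},r)$, and the isomorphism bound gives $\|\hat{u}'-\hat{u}\|_{V^{\ga-1}(\tilde{a},r)}=O(\|\de a\|_\infty+\|\de b\|_\infty+\|\de f\|_{W^{\ga}})$, so $\|\d_x(\hat{u}'-\hat{u})\|_{W^{\ga-2}}$ inherits the same bound. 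Subtracting $D\hat{u}[\de a,\de b,\de f]$ produces the remainder $-(\A+\B)^{-1}\bigl[(\de a)(\d_x\hat{u}'-\d_x\hat{u})+(\de b)(\hat{u}'-\hat{u})\bigr]$; applying $(\A+\B)^{-1}$ once more gives quadratic decay in $W^{\ga-2}\cap\CC$.

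\textbf{Continuity of the derivative and the main difficulty.} Each factor of the formula depends locally Lipschitz continuously on $(\tilde{a},\tilde{b},f)$ in the appropriate topology: $(\A+\B)^{-1}$ as an element of $\LL(W^{\ga-1};W^{\ga-2}\cap\CC)$ by Lemma~\ref{continuous1} at level $\ga-1$; $\hat{u}$ as an element of $W^{\ga-1}\cap\CC$ by Lemma~\ref{continuous1} at level $\ga$; and, via the algebraic identity $\tilde{a}\,\d_x\hat{u}=f-\d_t\hat{u}-\tilde{b}\,\hat{u}$, also $\d_x\hat{u}$ as an element of $W^{\ga-2}$. Pointwise multiplication by $L^\infty$-functions is continuous on every $W^s$, so combining these three convergences, together with the uniform bound on $\d_x\hat{u}$ in $W^{\ga-1}$ used for the term involving the variation of $(\A+\B)^{-1}$, gives convergence of $D\hat{u}_{(\tilde{a},\tilde{b},f)}$ in the operator norm $\LL(\Aa\times\Bb\times W^{\ga};W^{\ga-2}\cap\CC)$ as $(\tilde{a},\tilde{b},f)$ varies. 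The hardest point is precisely this two-derivative gap: one derivative is spent on $\d_x\hat{u}$ in the derivative formula, and a second on the $L^\infty$-Lipschitz step $(\A+\B)^{-1}$ of Lemma~\ref{continuous1}, and neither loss can be avoided without strengthening the regularity of the coefficients.
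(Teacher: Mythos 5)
Your proposal is correct and takes essentially the same route as the paper: the same candidate derivative coming from the affine dependence of the operator on $(\tilde a,\tilde b)$, the same quadratic remainder controlled by the uniform isomorphism bound of Lemma~\ref{isomorph}, and the same two-derivative loss (one spent on $\partial_x\hat u$, one on the Lipschitz continuity of the inverse from Lemma~\ref{continuous1}). The only difference is presentational: you compute the total Fr\'echet derivative in one stroke, whereas the paper treats the three partial derivatives separately, invoking Corollary~\ref{corr} for $\partial_b\hat u$ and reserving the hands-on candidate-plus-remainder argument for $\partial_a\hat u$.
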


\begin{proof} 
We have to show that all partial derivatives  $\partial_a \hat{u}$,
$\partial_b \hat{u}$, and  $\partial_f \hat{u}$ exist and  are continuous.

First, consider  $\partial_f \hat{u}$. From the definition \reff{map1} follows that it exists and that
$$
\partial_f\hat{u}\left(\tilde{a},\tilde{b},f\right)\bar{f}
=\left(\A(\tilde{a},\tilde{b}^0)+\B(\tilde{b}^1)\right)^{-1}\bar{f}.
$$ 
The continuity of the map
\begin{eqnarray*}
\left(\tilde{a},\tilde{b},f\right)\in
A_{\eps}(a) \times B_{\varepsilon}(b) \times W^{\ga}
\mapsto \partial_f\hat{u}\left(\tilde{a},\tilde{b},f\right) \in\LL(W^{\ga};W^{\ga-1}\cap \CC)
\end{eqnarray*}
follows from Lemma~\ref{continuous1}.

Now, consider  $\partial_b \hat{u}$. From Corollary~\ref{corr} follows that it exists, and \reff{map1} yields
\begin{eqnarray*}
\lefteqn{
\partial_b\hat{u}\left(\tilde{a},\tilde{b},f\right)\bar{b}=
-\left(\A(\tilde{a},\tilde{b}^0)+\B(\tilde{b}^1)\right)^{-1}}\\&&\times
\left(\partial_b\A(\tilde{a},\tilde{b}^0)\bar{b}+\B'(\tilde{b}^1)\bar{b}\right)
\left(\A(\tilde{a},\tilde{b}^0)+\B(\tilde{b}^1)\right)^{-1}f.
\end{eqnarray*}
Moreover, we have
\begin{equation}
\label{barB}
\left(\partial_b\A(\tilde{a},\tilde{b}^0)\bar{b}\right)u=\bar{b}^0u
\mbox{ and } \left(\B'(\tilde{b}^1)\bar{b}\right)u=\bar{b}^1u.
\end{equation}
Hence, $\partial_b\A(\tilde{a},\tilde{b}^0)$ and $\B'(\tilde{b}^1)$ do not depend on
$\tilde{a}$, and $\tilde{b}$. Therefore, again  Lemma~\ref{continuous1} yields the continuity of the map
$
\left(\tilde{a},\tilde{b},f\right)\in
 A_{\eps}(a) \times B_{\varepsilon}(b) \times W^{\ga}
\mapsto  \partial_b\hat{u}\left(\tilde{a},\tilde{b},f\right) 
\in\LL(\Bb;W^{\ga-1}\cap \CC).
$

Further, consider  $\partial_a \hat{u}$. If $\partial_a\hat{u}\left(\tilde{a},\tilde{b},f\right)$
exists as an element of the space 
$\LL(\Aa;W^{\ga-2}\cap \CC)$, then for any $\bar{a} \in BV((0,1),\M_n)$
we have
\begin{eqnarray}
\label{barA}
&&\left(\A(\tilde{a},\tilde{b}^0)+\B(\tilde{b}^1)\right) 
\partial_a\hat{u}\left(\tilde{a},\tilde{b},f\right)\bar{a}
=-\bar{a}\partial_x \hat{u}\left(\tilde{a},\tilde{b},f\right)\nonumber\\
&&=\bar{a}\tilde{a}^{-1}\left(\partial_t \hat{u}\left(\tilde{a},\tilde{b},f\right)
+\tilde{b}^1\hat{u}\left(\tilde{a},\tilde{b},f\right)-f\right).
\end{eqnarray}
The right hand side belongs to $W^{\ga-1}\cap \CC$, hence this 
equation determines uniquely the candidate 
$$
\hat{v}\left(\tilde{a},\tilde{b},f\right)\bar{a}:=
-\left(\A(\tilde{a},\tilde{b}^0)+\B(\tilde{b}^1)\right)^{-1}
\left(\partial_a \A(\tilde{a},\tilde{b}^0)\bar{a}\right)\hat{u}\left(\tilde{a},\tilde{b},f\right) 
$$
for $\partial_a\hat{u}\left(\tilde{a},\tilde{b},f\right)$
in  $\LL(\Aa;W^{\ga-1}\cap \CC)$.
Moreover, because of Lemma~\ref{continuous1} the candidate $\hat{v}$
for $\partial_a \hat{u}$ is continuous
as a map from  $ A_{\eps}(a) \times B_{\varepsilon}(b) \times W^{\ga}$
into  the space $\LL(\Aa;W^{\ga-2}\cap \CC)$.

It remains to prove that $\hat{v}$ is really  $\partial_a \hat{u}$. In order to show this, take 
  $a^\prime,a^{\prime\prime} \in   A_{\varepsilon}(a)$,
$\tilde{b} \in   B_{\varepsilon}(b)$,  $\ga \ge 2$,
$f \in W^{\ga}$, $u' \in V^{\ga}(a^\prime,r)$, and  $u'' \in V^{\ga}(a^{\prime\prime},r)$ such that
$$
\left(\A(a^\prime,\tilde{b}^0)+\B(\tilde{b}^1)\right)u'=\left(\A(a^{\prime\prime},\tilde{b}^0)+\B(\tilde{b}^1)\right)u''=f.
$$ 
Then
\begin{eqnarray*}
\lefteqn{
\left(\A(a^{\prime\prime},\tilde{b}^0)+\B(\tilde{b}^1)\right)\left(u''-u'-\hat{v}(a^\prime,\tilde{b}^0)(a^{\prime\prime}-a^\prime)\right)}\\
&&=\left(\A(a^\prime,\tilde{b}^0)-\A(a^{\prime\prime},\tilde{b}^0)\right)\hat{v}(a^\prime,\tilde{b}^0)(a^{\prime\prime}-a^\prime)\\&&
=-(a^{\prime\prime}-a^\prime)^2\d_x\hat{v}(a^\prime,\tilde{b}^0).
\end{eqnarray*}
Here we used that the map 
$\A(\cdot,\tilde{b})$
is affine. Hence
\begin{eqnarray*}
\left\|\left(\A(a^{\prime\prime},\tilde{b}^0)+\B(\tilde{b}^1)\right)\left(u''-u'-\hat{v}(a^\prime,\tilde{b}^0)(a^{\prime\prime}-a^\prime)\right)
\right\|_{W^{\ga-2}\cap \CC}\\
=o\left(\left\|a^{\prime\prime}-a^\prime\right\|_{\Aa}\right),
\end{eqnarray*}
and Lemma~\ref{isomorph} yields
$$
\left\|u''-u'-\hat{v}(a^\prime,\tilde{b}^0)(a^{\prime\prime}-a^\prime)\right\|_{W^{{\ga}-2}\cap \CC}
=o\left(\left\|a^{\prime\prime}-a^\prime\right\|_{\Aa}\right),
$$
\end{proof}

\begin{lemma}\label{ck}
The map $\hat{u}$ is $C^k$-smooth as a map into $W^{\ga-k-1}\cap \CC$ for all $1 \le k\le \ga-1$.
\end{lemma}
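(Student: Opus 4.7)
The plan is to argue by induction on $k$. The base case $k=1$ is Lemma~\ref{c1}. Assuming inductively that for every admissible $\ga'$ the map $\hat u : A_\eps(a)\times B_\eps(b)\times W^{\ga'}\to W^{\ga'-k}\cap\CC$ is $C^{k-1}$-smooth, the task is to promote this to $C^k$-smoothness of $\hat u$ into $W^{\ga-k-1}\cap\CC$ at the next value of $\ga$.

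Rather than iterating the composition formulas from Lemma~\ref{c1}, my plan is to differentiate the defining identity $(\A(\tilde a,\tilde b^0)+\B(\tilde b^1))\hat u(\tilde a,\tilde b,f)=f$ directly $k$ times in directions $\xi_i = (\bar a_i,\bar b_i,\bar f_i)$. Since $\A+\B$ is affine in $(\tilde a,\tilde b)$ and independent of $f$, Leibniz' rule gives the closed recursion
$$
(\A(\tilde a,\tilde b^0)+\B(\tilde b^1))\,D^k\hat u[\xi_1,\ldots,\xi_k]=-\sum_{i=1}^{k}\left(\bar a_i\partial_x+\bar b_i\right)D^{k-1}\hat u[\xi_1,\ldots,\xi_{i-1},\xi_{i+1},\ldots,\xi_k]
$$
for $k\ge 2$, while for $k=1$ one recovers the formulas already derived in Lemma~\ref{c1}. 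By the induction hypothesis $D^{k-1}\hat u$ takes values in $V^{\ga-k+1}(\tilde a,r)$ boundedly, whence $\partial_x D^{k-1}\hat u \in W^{\ga-k}$; the right-hand side therefore lies in $W^{\ga-k}$ and Theorem~\ref{thm:1.0} delivers $D^k\hat u \in V^{\ga-k}(\tilde a,r)$ together with the uniform bound provided by Lemma~\ref{isomorph}. The continuity of $D^k\hat u$ as a $k$-multilinear-form-valued map with values in $W^{\ga-k-1}\cap\CC$ then follows by applying Lemma~\ref{continuous1} at level $\ga-k$, exactly as in the $C^1$-argument of Lemma~\ref{c1}.

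The existence of $D^k\hat u$ as a bona fide Fréchet derivative (as opposed to a formal symbol given by the recursion above) will be established by a difference-quotient argument patterned on Lemma~\ref{c1}: one writes the increment $D^{k-1}\hat u(\tilde a'',\tilde b'',f'')-D^{k-1}\hat u(\tilde a',\tilde b',f')$ using the decomposition in the proof of Lemma~\ref{continuous}, isolates the linear-in-perturbation part as the candidate $D^k\hat u$, and controls the remainder of order $o(\|(\tilde a''-\tilde a',\tilde b''-\tilde b',f''-f')\|)$ in $W^{\ga-k-1}\cap\CC$ by combining Lemma~\ref{isomorph} with the induction hypothesis.

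The main obstacle will be the bookkeeping of regularity losses: one must verify that after $k$ compositions of $(\A+\B)^{-1}\circ(\bar a_i\partial_x+\bar b_i)$ the net loss in the $t$-regularity is exactly $k+1$, and that the continuity needed at each step is exactly what the induction hypothesis supplies. Both are handled by the two clean structural facts that multiplication by an $L^\infty$-function is a bounded bilinear (hence $C^\infty$) operation on every $W^{\ga'}$, and that $\partial_x$ is bounded from $V^{\ga'}(\tilde a,r)$ into $W^{\ga'-1}$ uniformly on $A_\eps(a)$, matching precisely the output regularity of $D^{k-1}\hat u$ predicted by the induction hypothesis.
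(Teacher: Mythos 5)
Your strategy is genuinely different from the paper's. The paper does not differentiate the defining identity and invert; instead it derives the closed-form composition identities
\reff{1}--\reff{3}, which express each first partial derivative of $\hat u$ as $\hat u$ itself evaluated at smoothly transformed arguments, and then bootstraps $C^1 \Rightarrow C^2 \Rightarrow \ldots$ purely by the chain rule. That route has a structural advantage you lose: once $\hat u$ is known to be $C^{k-1}$, the $C^{k-1}$-smoothness of each first partial derivative \emph{is automatic} because it is a composition of $\hat u$ with $C^\infty$ maps, so there is no need to re-run a difference-quotient argument at every order, nor to keep explicit track of where the $(k-1)$-th derivative lives.

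The gap in your write-up is precisely at the step you wave through: ``By the induction hypothesis $D^{k-1}\hat u$ takes values in $V^{\ga-k+1}(\tilde a,r)$ boundedly, whence $\partial_x D^{k-1}\hat u\in W^{\ga-k}$.'' Your stated induction hypothesis only asserts that $\hat u$ is $C^{k-1}$-smooth into $W^{\ga'-k}\cap\CC$; this says nothing about $\partial_x D^{k-1}\hat u$, nothing about the trace/boundary conditions encoded in $V^{\ga-k+1}(\tilde a,r)$, and nothing about $\partial_t D^{k-1}\hat u + \tilde a\,\partial_x D^{k-1}\hat u$ landing one order higher. The claim is true, but it is a \emph{consequence of the recursion at order $k-1$ together with Lemma~\ref{isomorph}}, not of the induction hypothesis as you have formulated it; you must carry it forward as an explicit additional clause of the inductive assertion (existence of $\partial_x D^{k-1}\hat u$ with a quantitative $W^{\ga-k}$-bound and its continuity in $(\tilde a,\tilde b,f)$), otherwise the step ``the right-hand side therefore lies in $W^{\ga-k}$'' is unjustified. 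There is a second, related subtlety you should address explicitly: the target space $V^{\ga-k}(\tilde a,r)$ \emph{moves with $\tilde a$}, which is the very obstacle the paper flags before Theorem~\ref{thm:1.2}. The paper's formulation deliberately asserts smoothness only into the fixed spaces $W^{\ga'}\cap\CC$, with the $V$-space memberships handled implicitly inside the identities \reff{1}--\reff{3}; if you insist on a direct argument via the recursion, you must say precisely in what sense $D^{k-1}\hat u$ is ``continuous with values in a variable space'' (e.g.\ by phrasing everything through $\partial_x D^{k-1}\hat u$ and $\partial_t D^{k-1}\hat u$ individually, which do take values in the fixed space $W^{\ga-k}$). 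Finally, a minor citation slip: the solvability step should invoke Lemma~\ref{isomorph} (isomorphism of $\A+\B$), not Theorem~\ref{thm:1.0}, which only concerns $\A$ alone; you do cite Lemma~\ref{isomorph} for the bound, but the existence claim rests on it as well.
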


\begin{proof}
For $k=1$ the lemma is true, and the first partial derivatives satisfy
\begin{eqnarray}
\label{1}
\partial_f\hat{u}\left(\tilde{a},\tilde{b},f\right)\bar{f}&=& \hat{u}\left(\tilde{a},\tilde{b},\bar{f}\right),\\
\label{2}
\partial_b\hat{u}\left(\tilde{a},\tilde{b},f\right)\bar{b}&=& 
-\hat{u}\left(\tilde{a},\tilde{b},\bar{\B}
\left(\hat{u}\left(\tilde{a},\tilde{b},f\right),\bar{b}\right)\right),\\
\label{3}
\partial_a\hat{u}\left(\tilde{a},\tilde{b},f\right)\bar{a}&=& 
-\hat{u}\left(\tilde{a},\tilde{b},\bar{\A}\left(\tilde{a},\tilde{b},f,
\hat{u}\left(\tilde{a},\tilde{b},f\right)\right)\bar{a}\right).
\end{eqnarray}
Here we denoted by $\bar{\A}\left(\tilde{a},\tilde{b},f,u\right): \Aa\to W^{\ga-1}$ the linear bounded operator
which is defined by (cf.\reff{barA})
$$
\bar{\A}\left(\tilde{a},\tilde{b},f,u\right)\bar a:=
\bar{a}\tilde{a}^{-1}\left(\partial_t u
+\tilde{b}^1u-f\right),
$$
and $\bar{\B}:W^\ga \times \Bb\to W^{\ga}$ is the bilinear bounded operator which is defined by (cf. \reff{barB})
$$
\bar{\B}(u,b):=bu.
$$
Obviously, the map
\begin{eqnarray*}
\lefteqn{
\left(\tilde{a},\tilde{b},f,u\right) \in \R \times A_\eps(a) \times B_\eps(b) \times W^\ga \times W^\ga}\\
&&\mapsto \bar{\A}\left(\tilde{a},\tilde{b},f,u\right) 
\in \LL(\Aa;W^{\ga-1}\cap \CC)
\end{eqnarray*}
is $C^\infty$-smooth. Hence, \reff{1}--\reff{3}, Lemma~\ref{c1} and the chain rule imply that the data-to-solution map $\hat{u}$ is $C^2$-smooth,
and one gets corresponding formulae for the second partial derivatives by differentiating the identities  \reff{1}--\reff{3}. For example, it holds
$$
\begin{array}{l}
\partial^2_f\hat{u}\left(\tilde{a},\tilde{b},f\right)=0,\\
\partial_f\partial_b\hat{u}\left(\tilde{a},\tilde{b},f\right)\left(\bar{f},\bar{b}\right)\\
=\partial_b\hat{u}\left(\tilde{a},\tilde{b},\bar{f}\right)\bar{b}=
-\hat{u}\left(\tilde{a},\tilde{b},\bar{\B}
\left(\hat{u}\left(\tilde{a},\tilde{b},\bar{f}\right),\bar{b}\right)\right),\\
\partial_f\partial_a\hat{u}\left(\tilde{a},\tilde{b},f\right)\left(\bar{f},\bar{a}\right)\\
=\partial_a\hat{u}\left(\tilde{a},\tilde{b},\bar{f}\right)\bar{a}=
-\hat{u}\left(\tilde{a},\tilde{b},\bar{\A}\left(\tilde{a},\tilde{b},\bar{f},
\hat{u}\left(\tilde{a},\tilde{b},\bar{f}\right)\right)\bar{a}\right).
\end{array}
$$
If $\ga \ge 4$, then those formulae and the chain rule imply that all second partial derivatives of $\hat{u}$ are $C^1$-smooth etc.
\end{proof}

\section*{Acknowledgments}
The work of the  first  author was done while visiting the Humboldt University of Berlin under the support of the Alexander von Humboldt Foundation.
The second author acknowledges support of the DFG Research Center {\sc Matheon}
mathematics for key technologies (project D8).

\end{document}